\title[Rowmotion and the Octahedron Recurrence]{Birational Rowmotion and the Octahedron Recurrence}
\author{Joseph Johnson}
\address{North Carolina State University, Raleigh, NC 27695}
\email{jwjohns5@ncsu.edu}
\author{Ricky Ini Liu}
\address{University of Washington, Seattle, WA 98195}
\email{riliu@uw.edu}
\thanks{R.\ I.\ Liu was partially supported by National Science Foundation grants DMS-1700302 and CCF-1900460.}
\date{\today}
\numberwithin{equation}{section}
\tikzstyle{w}=[circle, draw, fill=black, inner sep=0pt, minimum width=4pt]
 \colorlet{myGreen}{green!50!gray!120!}
\theoremstyle{plain}
\newtheorem{Th}{Theorem}[section]
\newtheorem{Lemma}[Th]{Lemma}
\newtheorem{Cor}[Th]{Corollary}
\newtheorem{Prop}[Th]{Proposition}
 \theoremstyle{definition}
\newtheorem{Def}[Th]{Definition}
\newtheorem{?}[Th]{Problem}
\newtheorem{Ex}[Th]{Example}
\newcommand{\op}{\mathcal{O}(P)}
\newcommand{\cp}{\mathcal{C}(P)}
\newcommand{\ST}{ST}
\newcommand\bigDiamond{\mathop{\mathpalette\bigDi@mond\relax}}
\newcommand\bigDi@mond[2]{%
  \vcenter{\hbox{\m@th
    \scalebox{\ifx#1\displaystyle 2\else1.2\fi}{$#1\Diamond$}%
  }}%
}
\begin{document}

\maketitle

\begin{abstract}
We use the octahedron recurrence to give a simplified statement and proof of a formula for iterated birational rowmotion on a product of two chains, first described by Musiker and Roby. Using this, we show that weights of certain chains in rectangles shift in a predictable way under the action of rowmotion. We then define generalized Stanley-Thomas words whose cyclic rotation uniquely determines birational rowmotion on the product of two chains. We also discuss the relationship between rowmotion and birational RSK and give a birational analogue of Greene's theorem in this setting.
\end{abstract}

\section{Introduction}

For any poset $P$, \emph{(combinatorial) rowmotion} is the action on the set of order ideals of $P$ that sends $I$ to the ideal generated by the minimal elements of $P \setminus I$. This action is well studied in the dynamical algebraic combinatorics literature; for background on rowmotion, see \cite{strikerwilliams}. 
On certain classes of posets (triangles, skeletal posets, rectangles, root posets, and others), rowmotion has a surprisingly small period, and it also sometimes exhibits other interesting phenomena such as homomesy and cyclic sieving: see \cite{einsteinpropp1,josephroby1,musikerroby,propproby,thomaswilliams}.

Rowmotion also has a description in terms of local, involutive transformations called \emph{toggles} \cite{cameronfonderflaass}. Reinterpreting these toggles as acting on lattice points in $\mathbb R^n$, one can lift toggles and hence rowmotion to piecewise-linear maps. One can then lift these further to the birational realm by replacing $\max$ with addition, addition with multiplication, and subtraction with division \cite{einsteinpropp2,einsteinpropp1}. Surprisingly, many results from the combinatorial level remain true on the birational level. For instance, for some posets, the period of birational rowmotion remains small, even though a priori it need not even be finite \cite{grinbergroby2,grinbergroby1}. 

The main poset of interest in this paper is the product of two chains, called the \emph{rectangle poset}. In \cite{grinbergroby2}, Grinberg and Roby show that birational rowmotion on the $r \times s$ rectangle has the same order as combinatorial rowmotion, $r+s$.  Musiker and Roby~\cite{musikerroby} then give an explicit combinatorial formula for all powers of birational rowmotion on rectangles in terms of nonintersecting lattice paths. However, their impressive formula is notationally dense, and their proof requires a rather intricate bijection on lattice paths. The main result of this paper is a simplified statement and proof of Musiker and Roby's iterated birational rowmotion formula for rectangles. Our proof relies mainly on the connections between rowmotion, the octahedron recurrence, the solid minors of a matrix, and nonintersecting lattice paths via the Lindstr\"om-Gessel-Viennot Lemma.

We also touch upon a number of topics related to this work. For instance, associated to any antichain of the $r \times s$ rectangle poset is a certain $0/1$-sequence of length $r+s$ called the \emph{Stanley-Thomas word}. Rowmotion on order ideals equivariantly induces a rowmotion action on antichains, which cyclically shifts the Stanley-Thomas word \cite{propproby,stanley1}. Previously, the Stanley-Thomas word has been defined in the birational realm to prove homomesy and cyclic sieving results \cite{josephroby1}, but this word alone is not enough to uniquely define a general labeling of a rectangle. In this article, we define \emph{generalized Stanley-Thomas words} in terms of certain sums of weights of chains and show that birational rowmotion is the unique function that cyclically shifts all of them. 





Finally, we discuss the relationship between the birational version of the \emph{Robinson-Schensted-Knuth (RSK) correspondence} \cite{danilovkoshevoy, noumiyamada} and rowmotion by defining birational RSK in terms of toggles. We use the iterated birational rowmotion formula to show that this definition satisfies a birational version of Greene's Theorem and also compare it to existing constructions in the literature.

\subsection*{Road map of the paper} In Section~\ref{section:background} we review background on birational rowmotion, the octahedron recurrence, and the relationship between the two. In Section~\ref{section:rowmotiononrectangles} we state and prove our main result, the iterated birational rowmotion formula on rectangles, using the Lindstr\"om-Gessel-Viennot Lemma and the octahedron recurrence. Using this framework we prove a chain shifting lemma in Section~\ref{section:chainshifting} and define generalized Stanley-Thomas words. Finally in Section~\ref{section:greenestheorem} we define birational RSK and prove a birational analogue of Greene's theorem, which we then use to show that the cyclic shifting of the generalized Stanley-Thomas words uniquely determines birational rowmotion.

\section{Background} \label{section:background}

\subsection{Posets and rectangles}
\label{subsection:posets}

We first review some basic terminology about posets. Typically we represent a finite poset $P=(P, \preceq)$ by its \emph{Hasse diagram}, a directed graph with vertex set $P$ and edges $x \to y$ for $x \lessdot y$. Since all edges are directed upward in the Hasse diagram, we omit the direction in figures.

\begin{Def}
Let $P$ be a poset. A \emph{chain} in $P$ is a sequence $p_1 \preceq p_2 \preceq \dots \preceq p_k$. An \emph{antichain} in $P$ is a set $A \subseteq P$ such that for any distinct $p,q \in A$, neither $p \preceq q$ nor $q \preceq p$.
\end{Def}

The antichains are related to the order ideals (and order filters) of a poset.

\begin{Def} Let $P$ be a finite poset.
\begin{enumerate}
    \item An \emph{order ideal} of $P$ is a set $I \subseteq P$ such that if $p,q \in P$ such that $p \preceq q$ and $q \in I$, then $p \in I$. 
    \item An \emph{order filter} of $P$ is a set $F \subseteq P$ such that if $p,q \in P$ such that $p \preceq q$ and $p \in I$, then $q \in I$. 
    \item An \emph{interval} of $P$ is a subset of the form $[p,q]=\{ x \mid p \preceq x \preceq q \}$.
\end{enumerate}
\end{Def}

Let $[r]$ be the chain with $r$ elements $1 < 2 < \cdots < r$. Of particular interest is the \emph{rectangle poset} given by the Cartesian product of two chains $R = [r] \times [s]$. We will distinguish between a rectangle poset and general posets by using $R$ exclusively for the rectangle. 

\begin{Def}
Let $R = [r] \times [s]$ be a rectangle poset.
\begin{itemize}
    \item For fixed $i$, the $i$th \emph{row} of $R$ is the set of all elements in $R$ of the form $(i,j)$. 
    \item For fixed $j$, the $j$th \emph{column} of $R$ is the set of all elements in $R$ of the form $(i,j)$. 
    \item The $k$th \emph{rank} of $R$ is the set of all elements $(i,j)$ such that $i+j=k$.
    \item The $k$th \emph{file} of $R$ is the set of all elements $(i,j)$ such that $j-i=k$.
\end{itemize}
\end{Def}
We will typically draw rectangles oriented as in Figure~\ref{squareposet}, so that rows run southwest to northeast, columns run southeast to northwest, ranks are aligned horizontally, and files are aligned vertically. (As a word of caution, note that the minimum element has rank $2$.)

\begin{figure}
\captionsetup{justification=centering}
\begin{tikzpicture}[rotate=45,scale=1.1]
\draw (0,0)--(2,0);
\draw (0,1)--(2,1);
\draw (0,0)--(0,1);
\draw (1,0)--(1,1);
\draw (2,0)--(2,1);

\draw [fill=black] (0,0) circle [radius=0.1cm];
\draw [fill=black] (1,0) circle [radius=0.1cm];
\draw [fill=black] (2,0) circle [radius=0.1cm];

\draw [fill=black] (0,1) circle [radius=0.1cm];
\draw [fill=black] (1,1) circle [radius=0.1cm];
\draw [fill=black] (2,1) circle [radius=0.1cm];

\node at (0.45,-0.45) {(1,1)};
\node at (1.45,-0.45) {(1,2)};
\node at (2.4,-0.4) {(1,3)};

\node at (0.45,0.55) {(2,1)};
\node at (1.45,0.55) {(2,2)};
\node at (2.45,0.55) {(2,3)};
\end{tikzpicture}

\caption{The rectangle $[2] \times [3]$.}
 \label{squareposet}
\end{figure}
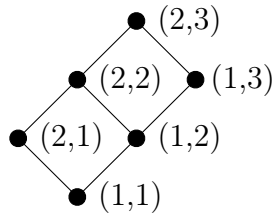

\subsection{Rowmotion}

An important object of study in dynamical algebraic combinatorics is a certain dynamical process on order ideals called combinatorial rowmotion.

\begin{Def}
Let $I$ be an order ideal of $P$. \textit{Combinatorial rowmotion} is the map $\rho$ that sends $I$ to the order ideal generated by the minimal elements of $P \setminus I$.
\end{Def}

Cameron and Fon-der-Flaass \cite{cameronfonderflaass} give another description of rowmotion in terms of \textit{combinatorial toggles}. Let $J(P)$ denote the set of order ideals of $P$. For each $p \in P$, we associate a \emph{toggle map} $t_p \colon J(P) \to J(P)$ by
\[t_p(I) = \begin{cases} I \cup \{p\} &\text{ if } p \not\in I \text{ and } I \cup \{p\} \in J(P), \\
I \setminus \{p\} &\text{ if } p \in I \text{ and } I \setminus \{p\} \in J(P), \\
I &\text{ otherwise.} \end{cases}\]
Combinatorial rowmotion can then be defined as the composition of toggles on $P$ in the order of a linear extension $\mathcal L \colon P \to [n]$ (where $n=|P|$) from top to bottom, that is,
\[\rho = t_{\mathcal{L}^{-1}(1)} \circ \dots \circ t_{\mathcal{L}^{-1}(n)}.\]
We note that $t_p \circ t_q = t_q \circ t_p$ if and only if $p$ and $q$ do not form a cover relation. Consequently this description is independent of the choice of linear extension.

In \cite{stanley1}, Stanley gives a bijection (also discovered independently by Thomas) between the order ideals of $R = [r] \times [s]$ and 0/1-sequences with $r$ 0's and $s$ 1's.

\begin{Def}
Let $A$ be an antichain of $R = [r] \times [s]$. The \emph{Stanley-Thomas word} of $A$ is $w(A) = (w_1, \dots, w_{r+s})$, where

\[w_i = \begin{cases} 1 &\text{ if } 1\leq i \leq r \text{ and } A \text{ has an element in row } i, \\ 
1 &\text{ if } r+1 \leq i \leq r+s \text{ and } A \text{ has no element in column } i-r, \\ 
0 &\text{ otherwise.} \end{cases}\]
\end{Def}

Rowmotion performs a cyclic shift of the Stanley-Thomas word \cite{propproby}. Since the map $w$ is a bijection, this gives a simple proof of the following result, originally proved by Fon-der-Flaass \cite{fonderflaass}.

\begin{Th}[Fon-Der-Flaass]
The order of $\rho$ on $R=[r] \times [s]$ is $r+s$.
\end{Th}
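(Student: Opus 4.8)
The plan is to make direct use of the two facts recalled immediately before the statement. First, the Stanley--Thomas word gives a bijection $w$ between the antichains of $R=[r]\times[s]$ (equivalently, via maximal elements, the order ideals) and the binary words of length $r+s$ having exactly $r$ zeros and $s$ ones; one checks that every antichain of size $m$ occupies $m$ distinct rows and $m$ distinct columns, so the number of ones is always $m+(s-m)=s$, confirming the image lies in this fixed set $W_{r,s}$. Second, under this bijection rowmotion is conjugate to the cyclic shift: writing $\sigma$ for the operation that rotates a word by one position, we have $w(\rho A)=\sigma(w(A))$ for every antichain $A$ \cite{propproby}. Thus it suffices to compute the order of $\sigma$ acting on $W_{r,s}$, since the order of $\rho$ on $J(R)$ equals the order of $\sigma$ on $W_{r,s}$.

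From here the argument is purely combinatorial. On the one hand, applying $\sigma$ a total of $r+s$ times returns any word of length $r+s$ to itself, so $\sigma^{\,r+s}=\mathrm{id}$ on $W_{r,s}$ and hence the order of $\rho$ divides $r+s$. On the other hand, to see that the order is not a proper divisor of $r+s$, I would exhibit a single word whose $\sigma$-orbit has full size $r+s$: the order of $\sigma$ (equivalently, of $\rho$) is the least common multiple of its orbit sizes, so one full-length orbit forces the order to be exactly $r+s$.

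For the full-length orbit I take the word $0^r1^s$, which lies in $W_{r,s}$ and hence equals $w(A)$ for some antichain $A$ because $w$ is surjective. I would argue that $0^r1^s$ is \emph{primitive}, i.e. not a proper cyclic power: read cyclically it has exactly one maximal run of zeros and one maximal run of ones, whereas any word of the form $u^{m}$ with $m>1$ and $u$ nonconstant has at least $m$ runs of each symbol. Consequently its $r+s$ cyclic rotations are pairwise distinct, so $\sigma^k(0^r1^s)\neq 0^r1^s$ for $0<k<r+s$, giving an orbit of size $r+s$. Combined with the divisibility above, the order of $\rho$ is exactly $r+s$.

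Since both the bijectivity of $w$ and the intertwining $w\circ\rho=\sigma\circ w$ are quoted as known, there is no serious obstacle here; the only point requiring any care is the lower bound, namely confirming that some Stanley--Thomas word is aperiodic so that the order is genuinely $r+s$ rather than a proper divisor. The short run-counting verification that $0^r1^s$ is primitive (using $r,s\geq 1$) is the one small computation to pin down.
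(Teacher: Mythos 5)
Your proof is correct and takes essentially the same approach as the paper: both rely on the Stanley--Thomas bijection and the quoted fact that rowmotion intertwines with the cyclic shift, so the order of $\rho$ equals the order of the shift on words with $r$ zeros and $s$ ones, which divides $r+s$. Your lower-bound witness $0^r1^s$ is precisely the Stanley--Thomas word of the empty order ideal, whose full-length orbit is the same one the paper invokes in the remark following the theorem (rowmotion adds one rank at a time until reaching the full poset), so your run-counting primitivity check is just a word-side verification of the paper's argument.
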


Note that $r+s$ is one more than the number of ranks of $R$. This is the smallest possible order that rowmotion can have on a graded poset: iteratively applying rowmotion to the empty order ideal simply adds one rank of elements at a time to the order ideal until arriving at the entire poset, which is then mapped back to the empty order ideal.

\subsection{Piecewise-Linear Rowmotion}

In \cite{stanley2}, Stanley defines two polytopes related to a finite poset.

\begin{Def}
Let $P$ be a finite poset. 

\begin{enumerate}
    \item The \emph{order polytope} $\op \subseteq \mathbb{R}^{P}$ is defined by the inequalities $0 \leq x_p \leq 1$ for all $p \in P$, and $x_p \leq x_q$ for all $p,q \in P$ satisfying $p \preceq q$.
    \item The \emph{chain polytope} $\cp \subseteq \mathbb{R}^{P}$ is defined by the inequalities $x_p \geq 0$ for all $p \in P$, and $\sum\limits_{p \in C} x_p \leq 1$ for all (maximal) chains $C \subseteq P$.
\end{enumerate}
\end{Def}
The vertices of $\op$ and $\cp$ are the indicator vectors of the order filters and antichains of $P$, respectively. We may also identify the vertices of $\op$ with the order ideals of $P$ by complementation.

In \cite{stanley2}, Stanley defines a piecewise-linear, continuous, volume-preserving bijection between $\op$ and $\cp$ called the \emph{transfer map}, defined as:
\[\phi(x)_p = x_p - \max\limits_{q \lessdot p} x_q\]
where we interpret an empty $\max$ as $0$.
The inverse of this map is given by
\[\phi^{-1}(x)_p = \max\limits_{c_1 < c_2 < \dots < c_k = x} \left( \sum\limits_{i=1}^k x_{c_i} \right).\]
The transfer map can be thought of as a piecewise-linear extension of the map that sends an order filter to its minimal elements.

Combinatorial rowmotion permutes order ideals and so can also be thought of as a permutation of the vertices of $\op$. As is the case with the transfer map, there is a natural piecewise-linearization of this bijection from \cite{einsteinpropp1}.

\begin{Def}
The \textit{piecewise-linear toggle} on the order polytope $\op$ corresponding to an element $p \in P$ is the map $t_p\colon \op \to \op$ that changes the $p$th coordinate by
\[x_p \mapsto \min\limits_{q \gtrdot p} x_q + \max\limits_{q \lessdot p} x_q - x_p\]
and fixes all other coordinates, where we interpret an empty $\min$ as $1$ and an empty $\max$ as $0$.
\end{Def}
(We will abuse notation and use the same symbol for combinatorial toggles and piecewise-linear toggles.)
Note that $t_p$ only depends on the coordinates in the neighborhood of $p \in P$ in the Hasse diagram. Consequently for $p,q \in P$, $t_p \circ t_q = t_q \circ t_p$ if and only if neither $p \lessdot q$ nor $p \gtrdot q$ as in the combinatorial realm.

We then define \textit{piecewise-linear rowmotion} $\rho\colon \op \to \op$ using piecewise-linear toggles by
\[\rho = t_{\mathcal{L}^{-1}(1)} \circ \cdots \circ t_{\mathcal{L}^{-1}(|P|)},\]
where $\mathcal{L}$ is any linear extension of $P$.

\subsection{Birational Rowmotion}

Piecewise-linear rowmotion can be lifted even further to a birational analogue that uses addition, multiplication, and division in place of $\max$, addition, and subtraction, respectively. This lifting process is called \emph{detropicalization}. (See \cite{einsteinpropp2,einsteinpropp1,noumiyamada} for more detailed discussion.) The functions resulting from detropicalization are generally \emph{subtraction-free} and therefore well-defined on positive labelings of $P$.

As an example, the \emph{birational transfer map} $\phi$ acts on positive labelings $x \in \mathbb R^P_{>0}$ via coordinate functions
\[\phi(x)_p = \frac{x_p}{\sum\limits_{q \lessdot p} x_q}\]
where we interpret an empty sum as $1$.

Since $\min(a,b) = -\max(-a,-b)$, detropicalizing $\min$ yields the \textit{parallel sum}~$\parallel$ defined by
\[a \parallel b = \frac{1}{\frac{1}{a} + \frac{1}{b}} = \frac{ab}{a+b}.\]
Parallel sum is associative and commutative. For a finite set $S \subseteq \mathbb{R}_{>0}$, we denote the parallel sum of all elements in $S$ by $\sideset{}{^\parallel} \sum\limits_{s \in S} s$.

\begin{Def}
The \textit{birational toggle} on $\mathbb{R}^{P}_{>0}$ corresponding to an element $p \in P$ is the birational map $t_p\colon \mathbb{R}^{P}_{>0} \to \mathbb{R}^{P}_{>0}$ that changes the $p$th coordinate by
\[x_p \mapsto \left( \sideset{}{^\parallel} \sum\limits_{q \gtrdot p} x_q \right) \left( \sum\limits_{q \lessdot p} x_q \right) \cdot \frac{1}{x_p}\]
and fixes all other coordinates, where we interpret an empty sum or empty parallel sum as $1$.
\end{Def}
(We again abuse notation by using the same notation for birational toggles as piecewise-linear toggles.) Similarly to piecewise-linear rowmotion, we define \textit{birational rowmotion} as
\[\rho = t_{\mathcal{L}^{-1}(1)} \circ \cdots \circ t_{\mathcal{L}^{-1}(|P|)}\]
for any linear extension $\mathcal{L}$ of $P$. From the birational setting, we can obtain the piecewise linear analogue via a \textit{valuation}: see, for instance, \cite{einsteinpropp2,einsteinpropp1}. 

For any poset, one can compute birational rowmotion in terms of the dual transfer map.

\begin{Def}
Let $P$ be a poset and $x \in \mathbb{R}_{>0}^{P}$ be a labeling. The \emph{dual transfer map} is the birational function $\phi^*\colon\mathbb{R}_{>0}^{P} \to \mathbb{R}_{>0}^{P}$ with coordinate functions
\[\phi^*(x)_p = \frac{x_p}{\sum\limits_{q \gtrdot p} x_q}\]
for all $p \in P$, where we interpret an empty sum as $1$.
\end{Def}

In other words, $\phi^*$ acts on labelings of $P$ in the same way that $\phi$ acts on the associated labeling of the dual of $P$. 

The following lemma is due to Einstein and Propp (see for instance \cite{einsteinpropp2,josephroby2}). We include a short proof for completeness.

\begin{Lemma} \label{lemma:dualtransfer}
Let $P$ be a finite poset. Then for any $x \in \mathbb{R}_{>0}^{P}$ and $p \in P$,
\[ \rho \circ \phi^{-1}(x)_p = \frac{1}{\left(\phi^*\right)^{-1}(x)_p}.\]
\end{Lemma}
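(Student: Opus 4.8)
The plan is to compute $\rho(\phi^{-1}(x))$ directly from the toggle definition and to track each coordinate as it is processed. Write $y = \phi^{-1}(x)$ and $z = (\phi^*)^{-1}(x)$, so that $\phi(y) = x$ and $\phi^*(z) = x$; the goal is then to show that $\rho(y)_p = 1/z_p$ for every $p \in P$. I would fix a linear extension $\mathcal L$ and apply the toggles $t_{\mathcal L^{-1}(n)}, \dots, t_{\mathcal L^{-1}(1)}$ to $y$ in this order, where $n = |P|$, so that the elements are toggled from top to bottom.

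The key step is to record the value at $p$ at the instant $p$ is toggled. Since each element is toggled exactly once and a toggle changes only its own coordinate, the value of coordinate $p$ immediately after $p$ is toggled already equals $\rho(y)_p$ and never changes thereafter. At the step when $p$ is toggled, every $q \gtrdot p$ lies above $p$ and has already been toggled, so its coordinate equals $\rho(y)_q$, while every $q \lessdot p$ lies below $p$ and has not yet been toggled, so its coordinate still equals $y_q$. Substituting into the birational toggle formula yields the recursion
\[\rho(y)_p = \left(\sideset{}{^\parallel}\sum_{q \gtrdot p} \rho(y)_q\right)\left(\sum_{q \lessdot p} y_q\right)\frac{1}{y_p}.\]

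I would then verify that $p \mapsto 1/z_p$ satisfies this same recursion. Using the identity $\sideset{}{^\parallel}\sum_{q \gtrdot p} \frac{1}{z_q} = \frac{1}{\sum_{q \gtrdot p} z_q}$ (the parallel sum of the reciprocals is the reciprocal of the sum, with empty sums read as $1$ on both sides), the substitution $\rho(y)_q = 1/z_q$ reduces the desired identity to
\[\frac{\sum_{q \gtrdot p} z_q}{z_p} = \frac{\sum_{q \lessdot p} y_q}{y_p}.\]
The left-hand side is $1/\phi^*(z)_p = 1/x_p$ and the right-hand side is $1/\phi(y)_p = 1/x_p$, so the two sides agree. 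Because the recursion expresses the value at $p$ purely in terms of values at elements strictly above $p$, with maximal elements serving as base cases, a downward induction on $P$ forces $\rho(y)_p = 1/z_p$ for all $p$, which is the claim.

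The main obstacle is the bookkeeping in the key step: justifying precisely that, at the moment $p$ is toggled, the coordinates above $p$ already hold their final values while those below still hold their initial values. This is exactly where the linear-extension order and the fact that $t_p$ alters only the $p$-coordinate are essential; once this is established the recursion is immediate and what remains is the short algebraic check above. One should also confirm that the empty-sum and empty-parallel-sum conventions agree at minimal and maximal elements, so that the base cases of the induction are consistent with the definitions of $\phi$ and $\phi^*$.
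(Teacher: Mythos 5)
Your proof is correct and follows essentially the same route as the paper's: the same downward induction from the maximal elements, the same observation that when $p$ is toggled the elements above it carry their final values $\rho(y)_q = 1/z_q$ while those below still carry $y_q$, and the same algebraic reduction via $\sideset{}{^\parallel}\sum_{q \gtrdot p} \frac{1}{z_q} = \frac{1}{\sum_{q \gtrdot p} z_q}$ and the definitions of $\phi$ and $\phi^*$. The only difference is expository (you verify that $1/z_p$ satisfies the recursion rather than computing $\rho(y)_p$ directly to $1/z_p$), which is mathematically the same calculation.
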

\begin{proof}
Let $y = \phi^{-1}(x)$ and $z = (\phi^*)^{-1}(x)$. For any $p \in P$, suppose that $\rho(y)_q = \frac{1}{z_q}$ for all $q > p$. When applying the toggle at $p$ in the computation of $\rho(y)$, the elements at or below $p$ are labeled as in $y$ while those above $p$ are labeled as in $\rho(y)$. Hence by the definition of $t_p$,
\[
    \rho(y)_p =  \left( \sideset{}{^\parallel} \sum\limits_{q \gtrdot p} \frac{1}{z_q} \right) \left(\sum\limits_{q \lessdot p} y_q \right) \cdot \frac{1}{y_p}  = \frac{1}{\sum\limits_{q \gtrdot p}z_q} \cdot \frac{1}{x_p} = \frac{1}{z_p}.
\]
The result follows easily by induction starting at the top of $P$.
\end{proof}

Computing iterated applications of rowmotion is more difficult. However, one can sometimes prove results about rowmotion using as associated combinatorial construction instead. As mentioned previously, one such example is the Stanley-Thomas word, which lifts to the birational level.

\begin{Def}
Let $R = [r] \times [s]$. The \emph{birational Stanley-Thomas word} $w$ is the word of length $r+s$ defined by
\[w_i = \begin{cases} \prod\limits_{j=1}^s x_{ij} &\text{ if } 1\leq i \leq r, \\ \prod\limits_{j=1}^r x_{j,i-r}^{-1} &\text{ if } r+1\leq i \leq r+s.\end{cases}\]
\end{Def}

In \cite{josephroby1} it is shown that $\phi \circ \rho \circ \phi^{-1}$ cyclically shifts the birational Stanley-Thomas word. Though this is not sufficient to show that the order of birational rowmotion on the product of two chains has finite order, it can be used to extend other results (such as instances of homomesy) to the birational level. In Section \ref{section:chainshifting} we define a collection of \emph{generalized Stanley-Thomas words}, and in Section \ref{section:greenestheorem} we show that the cyclic rotation of these words uniquely determines rowmotion.

\subsection{Dodgson Condensation and the Octahedron Recurrence} \label{sec:octahedron}

Given a matrix $A=(a_{ij})_{i,j=1}^{n}$, let $A_{ij}^{(k)}$ denote the $k \times k$ submatrix of $A$ formed by the intersection of rows $i$ through $i+k-1$ and columns $j$ through $j+k-1$ whenever $1 \leq i,j \leq n-k+1$. These minors satisfy the following algebraic relation known as the \emph{Desnanot-Jacobi identity}, which forms the basis of a recursive algorithm for computing the determinant of a matrix called \emph{Dodgson condensation}.

\begin{Prop} \label{prop:dodgson}
For $k \geq 0$,
\[\det(A_{ij}^{(k+1)}) \det(A_{i+1,j+1}^{(k-1)})=\det(A_{ij}^{(k)})\det(A_{i+1,j+1}^{(k)})-\det(A_{i,j+1}^{(k)})\det(A_{i+1,j}^{(k)}).\]
(By convention, we set $\det(A_{ij}^{(0)})=1$ and $\det(A_{ij}^{(-1)})=0$ for all integers $i$ and $j$.)
\end{Prop}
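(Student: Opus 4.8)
The plan is to relabel so that the claim becomes the classical Desnanot--Jacobi (Lewis Carroll) identity for a single square matrix, and then to prove that identity by evaluating one $2\times 2$ minor of the inverse matrix in two different ways.

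First I would dispose of the degenerate cases built into the stated conventions. For $k=0$ the left-hand side is $a_{ij}\det(A^{(-1)}_{i+1,j+1})=0$ while the right-hand side is $1\cdot 1-1\cdot 1=0$, and for $k=1$ the identity is exactly the cofactor expansion of a $2\times 2$ determinant. So assume $k\geq 2$ and set $M=A^{(k+1)}_{ij}$, an $n\times n$ matrix with $n=k+1\geq 3$. Writing $M^{R}_{C}$ for the matrix obtained from $M$ by deleting the rows in $R$ and the columns in $C$, the minors appearing in the statement become, respectively, $\det(A^{(k+1)}_{ij})=\det M$, $\det(A^{(k)}_{ij})=\det(M^{n}_{n})$, $\det(A^{(k)}_{i+1,j+1})=\det(M^{1}_{1})$, $\det(A^{(k)}_{i,j+1})=\det(M^{n}_{1})$, $\det(A^{(k)}_{i+1,j})=\det(M^{1}_{n})$, and $\det(A^{(k-1)}_{i+1,j+1})=\det(M^{1,n}_{1,n})$. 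Thus it suffices to prove
\[\det(M)\,\det(M^{1,n}_{1,n})=\det(M^{n}_{n})\det(M^{1}_{1})-\det(M^{n}_{1})\det(M^{1}_{n}).\]

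Both sides are polynomials in the entries of $M$, so I would treat those entries as indeterminates and prove the identity in their field of fractions, where $\det M$ and $\det(M^{1,n}_{1,n})$ are nonzero; the identity of polynomials then follows by specialization, giving the result over any commutative ring. Working in this generic setting, let $N=M^{-1}$ and consider the $2\times 2$ minor of $N$ on rows and columns $\{1,n\}$, namely $N_{11}N_{nn}-N_{1n}N_{n1}$. On one hand, the cofactor formula $N_{ab}=(-1)^{a+b}\det(M^{b}_{a})/\det M$ evaluates this minor as $\bigl(\det(M^{1}_{1})\det(M^{n}_{n})-\det(M^{n}_{1})\det(M^{1}_{n})\bigr)/(\det M)^{2}$, the two cross-term signs $(-1)^{1+n}$ and $(-1)^{n+1}$ multiplying to $+1$. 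On the other hand, grouping the ``boundary'' indices $\{1,n\}$ and the ``interior'' indices $\{2,\dots,n-1\}$ into a $2\times 2$ block decomposition of $M$ and applying the Schur complement formula identifies the same minor with $\det(M^{1,n}_{1,n})/\det M$. Equating the two evaluations and clearing $\det M$ yields the displayed identity, which is precisely the claim.

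The only real obstacle is sign bookkeeping: one must verify that the cofactor signs attached to $N_{1n}$ and $N_{n1}$ cancel, and that the Schur complement route (equivalently, Jacobi's theorem on minors of the inverse) produces no stray sign, which it does not because the deleted boundary indices $\{1,n\}$ sit symmetrically in the first and last positions. Everything else is formal. I note that one could instead give a purely combinatorial proof by exhibiting a sign-reversing involution on pairs of permutations, equivalently on families of nonintersecting lattice paths through the Lindstr\"om--Gessel--Viennot lemma; this route would be more in keeping with the combinatorial machinery used later in the paper, at the cost of a more intricate bijection.
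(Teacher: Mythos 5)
Your proof is correct, but note that the paper itself offers no proof of this proposition: it is stated as the classical Desnanot--Jacobi identity, a known result underlying Dodgson condensation, and is simply used as a black box for the rest of the argument. Your write-up therefore supplies something the paper omits, and it does so by one of the standard routes: after correctly translating the six minors into the submatrices $M$, $M^{1}_{1}$, $M^{n}_{n}$, $M^{1}_{n}$, $M^{n}_{1}$, $M^{1,n}_{1,n}$ of the single matrix $M=A^{(k+1)}_{ij}$, you evaluate the $2\times 2$ minor of $M^{-1}$ on rows and columns $\{1,n\}$ once by the adjugate/cofactor formula and once by Jacobi's theorem on minors of the inverse (via the Schur complement). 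The sign bookkeeping is right: the cross-term cofactor signs $(-1)^{1+n}$ and $(-1)^{n+1}$ multiply to $+1$, and the Jacobi/Schur evaluation $\det\bigl((M^{-1})[\{1,n\},\{1,n\}]\bigr)=\det(M^{1,n}_{1,n})/\det M$ carries no sign precisely because the row and column index sets coincide, making the relevant exponent $2(1+n)$ even. The reduction from arbitrary entries to generic (indeterminate) entries by polynomial specialization is legitimate and takes care of the invertibility hypotheses, and the degenerate cases $k=0$ and $k=1$ built into the paper's conventions are disposed of correctly. One could quibble that invoking Jacobi's theorem is invoking a result of the same depth as the one being proved, but your Schur-complement derivation of the special case you need makes the argument self-contained; the alternative you mention (a sign-reversing involution, or a Lindstr\"om--Gessel--Viennot argument) would indeed be closer in spirit to the lattice-path machinery the paper develops later, though it is not what the paper does either.
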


We visualize this relation by placing the values $\det(A_{ij}^{(k)})$ into a three-dimensional array. In all figures containing such an array, we place the entries $\det(A_{ij}^{(k)})$ at height $k$ so that, for $k>1$, $\det(A_{ij}^{(k)})$ lies directly above the center of the submatrix at height $1$ for which it is the determinant. See Figure~\ref{fig:octahedron}.






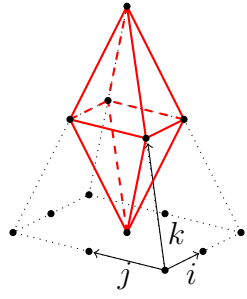
\begin{figure}
\begin{tikzpicture}[scale = 0.5]

\draw [dotted] (0,0)--(-4,1)--(-2,2)--(2,1)--cycle;
\draw [dotted] (-0.5,3.5)--(-2.5,4)--(-1.5,4.5)--(0.5,4)--cycle;

\draw [dotted] (0,0)--(-1,7);
\draw [dotted] (-4,1)--(-1,7);
\draw [dotted] (-2,2)--(-1,7);
\draw [dotted] (2,1)--(-1,7);

\draw [->] (0,0)--(-1.88,0.47);
\draw [->] (0,0)--(0.9,0.45);
\draw [->] (0,0)--(-0.45,3.36);

\node at (0.7,-0.1) {$i$};
\node at (-1.05,-0.15) {$j$};
\node at (0.3,1) {$k$};

\draw [red, thick] (-1,1)--(-2.5,4)--(-1,7)--(0.5,4)--cycle;
\draw [red, thick] (-1,1)--(-0.5,3.5)--(-1,7);
\draw [red, thick] (-2.5,4)--(-0.5,3.5)--(0.5,4);
\draw [red, dashed, thick](-1,1)--(-1.5,4.5)--(-1,7);
\draw [red, dashed, thick] (-2.5,4)--(-1.5,4.5)--(0.5,4);

\draw [fill=black] (0,0) circle [radius=0.08cm];
\draw [fill=black] (-2,0.5) circle [radius=0.08cm];
\draw [fill=black] (-4,1) circle [radius=0.08cm];

\draw [fill=black] (1,0.5) circle [radius=0.08cm];
\draw [fill=black] (-1,1) circle [radius=0.08cm];
\draw [fill=black] (-3,1.5) circle [radius=0.08cm];

\draw [fill=black] (2,1) circle [radius=0.08cm];
\draw [fill=black] (0,1.5) circle [radius=0.08cm];
\draw [fill=black] (-2,2) circle [radius=0.08cm];

\draw [fill=black] (-0.5,3.5) circle [radius=0.08cm];
\draw [fill=black] (-2.5,4) circle [radius=0.08cm];

\draw [fill=black] (0.5,4) circle [radius=0.08cm];
\draw [fill=black] (-1.5,4.5) circle [radius=0.08cm];

\draw [fill=black] (-1,7) circle [radius=0.08cm];

\end{tikzpicture}
\caption{The coordinate system that we will use for all figures depicting three-dimensional arrays $(M_{ij}^{(k)})$. The octahedron recurrence involves the vertices of translations of the octahedron shown.
}
\label{fig:octahedron}
\end{figure}

Proposition~\ref{prop:dodgson} then implies that the entries $M_{ij}^{(k)} = \det(A_{ij}^{(k)})$ satisfy the following \emph{octahedron recurrence}:
\[M_{ij}^{(k)}M_{i+1,j+1}^{(k)} = M_{i,j+1}^{(k)}M_{i+1,j}^{(k)} + M_{ij}^{(k+1)}M_{i+1,j+1}^{(k-1)}.\]
The elements in this relation lie at the vertices of an octahedron that is a translation of the one shown in Figure~\ref{fig:octahedron}. By convention, we extend the array $M=(M_{ij}^{(k)})$ to all integers $i$, $j$, and $k$ by setting any undefined values equal to $0$, which does not violate the octahedron recurrence.




We now demonstrate the known relationship between toggles and the octahedron recurrence appearing in  \cite{grinbergroby2}. A visualization of this lemma is shown in Figure~\ref{fig:toggle}. If part of the rectangle poset is labeled via the quotients $z_{ij}^{(k)}$ as shown, then Lemma~\ref{lemma:arraytoggle} shows that a toggle at $(i,j)$ transforms $z_{ij}^{(k)}$ to $z_{ij}^{(k+1)}$.

\begin{Lemma}\label{lemma:arraytoggle}
Suppose $M = (M_{ij}^{(k)})$ is an array of indeterminates satisfying the octahedron recurrence, and let $z_{ij}^{(k)} = \frac{M_{k+2,i+k+1}^{(j-1)}}{M_{k+1,i+k+1}^{(j)}}$.
Then \[z_{ij}^{(k)} = \frac{(z_{i,j-1}^{(k)}+z_{i-1,j}^{(k)})(z_{i+1,j}^{(k-1)}\parallel z_{i,j+1}^{(k-1)})}{z_{ij}^{(k-1)}}.\]


\end{Lemma}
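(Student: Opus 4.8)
The plan is to verify the identity by direct computation: expand each of the six $z$'s appearing in the statement in terms of the entries of $M$, and show that the two nonlinear combinations on the right-hand side collapse, via the octahedron recurrence, to single monomials in the $M_{ij}^{(k)}$. To keep the index bookkeeping manageable I would abbreviate $a = i+k+1$ and $b = i+k$, recording the crucial relation $b = a-1$, so that every $z$ in sight involves only the column indices $a,b$, the row indices $k,k+1,k+2$, and the heights $j-2,\dots,j+1$. Unwinding the definition of $z_{ij}^{(k)}$ gives, for example, $z_{ij}^{(k)} = M_{k+2,a}^{(j-1)}/M_{k+1,a}^{(j)}$ and $z_{ij}^{(k-1)} = M_{k+1,b}^{(j-1)}/M_{k,b}^{(j)}$, with the remaining four obtained from the appropriate shifts of $i,j,k$.

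The first key step is to simplify the parallel sum $z_{i+1,j}^{(k-1)} \parallel z_{i,j+1}^{(k-1)}$. Using $a \parallel b = ab/(a+b)$, its denominator is $M_{k+1,a}^{(j-1)} M_{k,b}^{(j+1)} + M_{k+1,b}^{(j)} M_{k,a}^{(j)}$, which is precisely the right-hand side of the octahedron recurrence applied at $(i',j',k') = (k,b,j)$ (here $b+1=a$); hence it equals the single product $M_{k,b}^{(j)} M_{k+1,a}^{(j)}$. The second key step is the analogous simplification of $z_{i,j-1}^{(k)} + z_{i-1,j}^{(k)}$: after clearing denominators its numerator is $M_{k+2,a}^{(j-2)} M_{k+1,b}^{(j)} + M_{k+2,b}^{(j-1)} M_{k+1,a}^{(j-1)}$, which is the octahedron recurrence at $(i',j',k') = (k+1,b,j-1)$ and therefore collapses to $M_{k+1,b}^{(j-1)} M_{k+2,a}^{(j-1)}$.

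With both combinations reduced to monomials, I would multiply the two simplified factors and divide by $z_{ij}^{(k-1)} = M_{k+1,b}^{(j-1)}/M_{k,b}^{(j)}$. Every $M$-factor then cancels except $M_{k+2,a}^{(j-1)}$ in the numerator and $M_{k+1,a}^{(j)}$ in the denominator, leaving exactly $z_{ij}^{(k)}$, as desired. The only genuine subtlety — and the step I would check most carefully — is matching each of the two sums of products to the correct instance of the octahedron recurrence; once the substitutions $j'=b=a-1$ together with the appropriate heights are pinned down, both applications are immediate and the remaining algebra is routine telescoping. No induction or auxiliary construction is required: the identity is a purely local consequence of a single octahedron relation in each of the two combining steps.
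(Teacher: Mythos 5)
Your proof is correct and follows essentially the same route as the paper's: the two octahedron relations you invoke (at row/column/height indices $(k,i+k,j)$ for the parallel sum and $(k+1,i+k,j-1)$ for the ordinary sum) are exactly the two octahedra the paper applies the recurrence to before dividing and solving for $z_{ij}^{(k)}$. The only difference is notational — you work directly with the $M$-indices and simplify each factor to a monomial quotient, whereas the paper relabels the ten relevant entries and manipulates the resulting quotients — so the arguments are the same computation organized slightly differently.
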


\begin{figure}
\begin{tikzpicture}[scale = 0.6]

\draw [red, thick] (-2,2)--(-3.5,5)--(-2,8)--(-0.5,5)--cycle;
\draw [red, thick] (-2,2)--(-1.5,4.5)--(-2,8);
\draw [red, thick] (-3.5,5)--(-1.5,4.5)--(-0.5,5);
\draw [red, dashed, thick](-2,2)--(-2.5,5.5)--(-2,8);
\draw [red, dashed, thick] (-3.5,5)--(-2.5,5.5)--(-0.5,5);

\draw [blue, thick] (-3.5,5)--(-5,8)--(-3.5,11)--(-2,8)--cycle;
\draw [blue, thick] (-3.5,5)--(-3,7.5)--(-3.5,11);
\draw [blue, thick] (-5,8)--(-3,7.5)--(-2,8);
\draw [blue, dashed, thick](-3.5,5)--(-4,8.5)--(-3.5,11);
\draw [blue, dashed, thick] (-5,8)--(-4,8.5)--(-2,8);

\draw [green, thick] (-1.5,4.5)--(-3,7.5);
\draw [black, thick] (-2.5,5.5)--(-4,8.5);

\draw [fill=black] (-2,2) circle [radius=0.08cm];

\draw [fill=black] (-1.5,4.5) circle [radius=0.08cm];
\draw [fill=black] (-3.5,5) circle [radius=0.08cm];

\draw [fill=black] (-0.5,5) circle [radius=0.08cm];
\draw [fill=black] (-2.5,5.5) circle [radius=0.08cm];

\draw [fill=black] (-3,7.5) circle [radius=0.08cm];
\draw [fill=black] (-5,8) circle [radius=0.08cm];

\draw [fill=black] (-2,8) circle [radius=0.08cm];
\draw [fill=black] (-4,8.5) circle [radius=0.08cm];

\draw [fill=black] (-3.5,11) circle [radius=0.08cm];

\node at (-1.2,4.3) {$x$};
\node at (-2.2,5.2) {$x'$};

\node at (-2.6,7.3) {$y$};
\node at (-3.55,8.8) {$y'$};

\node at (-2.4,2) {$a$};
\node at (-0.1,5) {$b$};
\node at (-4,5) {$c$};
\node at (-1.6,8) {$d$};
\node at (-5.4,8) {$e$};
\node at (-3.1,11) {$f$};

\end{tikzpicture}
\begin{tikzpicture}
\transparent{0.0};
\draw (0,-2.7)--(0,2.7);
\transparent{1.0};

\draw (-1,-1)--(1,1);
\draw (-1,1)--(1,-1);

\draw [fill=black] (-1,-1) circle [radius=0.1cm];
\draw [fill=black] (1,-1) circle [radius=0.1cm];

\draw [fill=black] (0,0) circle [radius=0.1cm];

\draw [fill=black] (-1,1) circle [radius=0.1cm];
\draw [fill=black] (1,1) circle [radius=0.1cm];

\node at (-2.05,-1) {$z_{i,j-1}^{(k)} = \frac{a}{c}$};
\node at (2.05,-1) {$z_{i-1,j}^{(k)} = \frac{b}{d}$};

\node at (2.25,0) {$z_{i,j}^{(k-1)} = \frac{x}{y}\to z_{i,j}^{(k)} = \frac{x'}{y'}$};

\node at (-2.05,1) {$z_{i+1,j}^{(k-1)} = \frac{c}{e}$};
\node at (2.05,1) {$z_{i,j+1}^{(k-1)} = \frac{d}{f}$};

\end{tikzpicture}
\caption{The relationship between the octahedron recurrence and birational toggles as described in Lemma~\ref{lemma:arraytoggle}. Two labelings of the poset by quotients of entries in the octahedron recurrence are related by a birational toggle at the central element.}
\label{fig:toggle}
\end{figure}
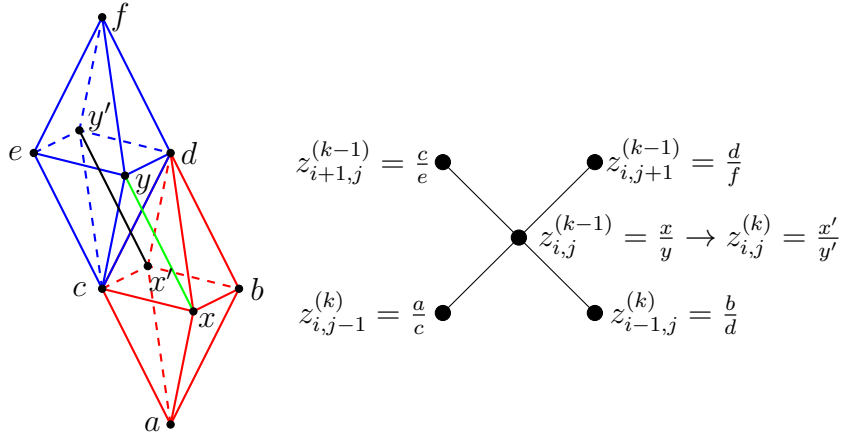

\begin{proof} For ease of notation, we relabel the relevant part of the array as in Figure \ref{fig:toggle}. Applying the octahedron recurrence to the two octahedra shown and dividing gives
\begin{align*}
    \frac{xx'}{yy'} &= \frac{ad+bc}{cf+de} 
= \frac{\frac{a}{c} + \frac{b}{d}}{\frac{f}{d} + \frac{e}{c}}
    = \left( \frac{a}{c} + \frac{b}{d} \right) \left(\frac{d}{f} \parallel  \frac{c}{e} \right).  
\end{align*}
Solving for $\frac{x'}{y'}$ yields the desired identity
\[\frac{x'}{y'}=  \frac{\left( \frac{a}{c} + \frac{b}{d} \right) \left(\frac{c}{e} \parallel  \frac{d}{f} \right)}{\frac xy}.\qedhere\]
\end{proof}

In the proof above, if we set $e = M_{k,i+k+1}^{(j)} = 0$ (which would make $\frac ce = z_{i+1,j}^{(k-1)}$ undefined), then we instead have
\begin{align*}
    \frac{xx'}{yy'} &= \frac{ad+bc}{cf} 
= \frac{\frac{a}{c} + \frac{b}{d}}{\frac{f}{d}}
    = \left( \frac{a}{c} + \frac{b}{d} \right) \left(\frac{d}{f} \right),  
\end{align*}
which yields the same result but with the undefined term $z_{i+1,j}^{(k-1)}$ removed from the parallel sum.
A similar result holds if we instead set $f = M_{k,i+k}^{(j+1)}=0$ (which would make $\frac{d}{f} = z_{i,j+1}^{(k-1)}$ undefined).

Lemma~\ref{lemma:arraytoggle} suggests that toggling/rowmotion should be thought of as a translation in the octahedron recurrence. In the next section, we will see how this relationship can be used to prove the iterated birational rowmotion formula for rectangles.

\section{Birational Rowmotion in the Rectangle Poset} \label{section:rowmotiononrectangles}

In this section, we will use the relationship between the octahedron recurrence and toggles to prove a formula for any power of birational rowmotion on a rectangle similar to one given by Musiker-Roby \cite{musikerroby}. This formula will be described in terms of nonintersecting paths inside a certain graph $\mathcal G_R$. We will begin by proving a lemma that relates nonintersecting paths in $\mathcal G_R$ to nonintersecting paths in $R$.

\subsection{Nonintersecting paths} \label{sec:paths}
Let $R=[r] \times [s]$, and let the element $(i,j)$ have weight $x_{ij}$. For $1 \leq k \leq s$, define $\mathcal P^{(k)}_R$ to be the set of all collections $\mathcal L$ of $k$ nonintersecting (i.e., vertex disjoint) paths starting at $\{(1,1), (1,2), \dots, (1,k)\}$ and ending at $\{(r,s-k+1),\dots,(r,s)\}$. The \emph{weight} $w(\mathcal L)$ is the product of the weights of the elements in the paths of $\mathcal L$. We will write $w_R^{(k)} = w_R^{(k)}(x)$ for the sum of the weights of all collections of paths in $\mathcal P^{(k)}_R$.  We also write $w_R = w_R^{(s)}$ for the product of all weights in $R$.   Similarly, if $I$ is any interval in $R$, then we define $\mathcal P_I^{(k)}$, $w_I^{(k)}(x)$, and $w_I$ in an analogous manner.

Note that while the definition of $\mathcal P_R^{(k)}$ is asymmetric in $r$ and $s$, if $k \leq \min\{r,s\}$, then the paths in any $\mathcal L \in \mathcal P_R^{(k)}$ must pass through all $k$ elements at rank $k+1$ as well as those at rank $r+s-k+1$. Thus in this case $w_R^{(k)}$ will remain unchanged if we transpose $R$. (In addition, it is simple to verify that $w_R^{(k)} = w_R$ if $k \geq \min\{r,s\}$.)

\medskip

Define $\mathcal{G}_R$ to be the graph with vertex set $[r+1] \times [s]$, directed edges from $(i,j)$ to $(i+1,j)$ of weight $x_{ij}^{-1}$, and directed edges from $(i,j)$ to $(i+1,j-1)$ of weight 1. (The vertices of $R$ are in bijection with the edges of $\mathcal{G}_R$ whose weights are not $1$.)
As an example, Figure~\ref{fig:poset_and_graph} shows $R = [2] \times [3]$ and the corresponding graph $\mathcal G_R$.

We label the vertices on the boundary of $\mathcal G_R$ as follows: for $1 \leq i \leq r$ and $1 \leq j \leq s$, define
\[P_j = (1, j), \quad P_{s+i} = (i+1, s), \quad Q_i = (i,1), \quad Q_{r+j} = (r+1, j).\]
We then define $\mathcal S^{(k)}_{ij}$ to be the set of all collections $\mathcal L'$ of $k$ nonintersecting paths starting from $\{P_i, \dots, P_{i+k-1}\}$ and ending at $\{Q_j, \dots, Q_{j+k-1}\}$. The \emph{weight} $w(\mathcal L')$ of $\mathcal L'$ is the product of the weights of the edges in the paths of $\mathcal L'$. Let $W_{ij}^{(k)} = W_{ij}^{(k)}(x)$ be the total weight of all $\mathcal L' \in \mathcal S^{(k)}_{ij}$.


\begin{figure}
\begin{tikzpicture}[scale = 1.3, rotate = 135]
\transparent{0.0};
\draw [blue] (-0.7,0.2)--(0.5,-1);
\draw [blue] (0.5,-1)--(-0.5,-2);
\transparent{1.0};

\draw (0,0)--(1,0);
\draw [red,thick] (0,-1)--(1,-1);
\draw (0,-2)--(1,-2);

\draw [thick, red] (0,0)--(0,-1);
\draw (0,-1)--(0,-2);
\draw (1,0)--(1,-1);
\draw [thick, red] (1,-1)--(1,-2);

\draw [red, fill=red] (0,0) circle [radius=0.08cm];
\draw [fill=black] (1,0) circle [radius=0.08cm];

\draw [red, fill=red] (0,-1) circle [radius=0.08cm];
\draw [red, fill=red] (1,-1) circle [radius=0.08cm];

\draw [fill=black] (0,-2) circle [radius=0.08cm];
\draw [red, fill=red] (1,-2) circle [radius=0.08cm];

\begin{scriptsize}
\node at (-0.3,-0.25) {$x_{11}$};
\node at (-0.3,-1.25) {$x_{12}$};
\node at (-0.3,-2.25) {$x_{13}$};

\node at (0.7,-0.25) {$x_{21}$};
\node at (0.7,-1.25) {$x_{22}$};
\node at (0.7,-2.25) {$x_{23}$};
\end{scriptsize}

\end{tikzpicture}
\qquad
\begin{tikzpicture}[scale = 1.3, rotate = 135]
\draw (0,0)--(1,0);
\draw [blue, thick] (1,0)--(2,0);
\draw (0,-1)--(2,-1);
\draw [blue, thick] (0,-2)--(1,-2);
\draw (1,-2)--(2,-2);

\draw [blue, thick] (0,-1)--(1,0);
\draw (0,-2)--(2,0);
\draw [blue, thick] (1,-2)--(2,-1);

\draw [fill=black] (0,0) circle [radius=0.08cm];
\draw [blue, fill=blue] (1,0) circle [radius=0.08cm];
\draw [blue, fill=blue] (2,0) circle [radius=0.08cm];

\draw [blue, fill=blue] (0,-1) circle [radius=0.08cm];
\draw [fill=black] (1,-1) circle [radius=0.08cm];
\draw [blue, fill=blue] (2,-1) circle [radius=0.08cm];

\draw [blue, fill=blue] (0,-2) circle [radius=0.08cm];
\draw [blue, fill=blue] (1,-2) circle [radius=0.08cm];
\draw [fill=black] (2,-2) circle [radius=0.08cm];

\begin{scriptsize}
\node at (-0.2,-0.2) {$P_1$};
\node at (-0.2,-1.2) {$P_2$};
\node at (-0.2,-2.2) {$P_3$};
\node at (0.8,-2.2) {$P_4$};
\node at (1.8,-2.2) {$P_5$};

\node at (0.2,0.2) {$Q_1$};
\node at (1.2,0.2) {$Q_2$};
\node at (2.2,0.2) {$Q_3$};
\node at (2.2,-0.8) {$Q_4$};
\node at (2.2,-1.8) {$Q_5$};

\node at (0.35,-0.23) {$x_{11}^{-1}$};
\node at (1.35,-0.23) {$x_{21}^{-1}$};

\node at (0.35,-1.23) {$x_{12}^{-1}$};
\node at (1.35,-1.23) {$x_{22}^{-1}$};

\node at (0.35,-2.23) {$x_{13}^{-1}$};
\node at (1.35,-2.23) {$x_{23}^{-1}$};
\end{scriptsize}
\end{tikzpicture}
\caption{The poset $R$ and graph $\mathcal G_R$ for $r=2$ and $s=3$. Edges are directed up and to the left in $\mathcal{G}_{R}$, and unlabeled edges have weight $1$. The highlighted paths correspond via the bijection in Lemma~\ref{pathsinideal}.}
\label{fig:poset_and_graph}
\end{figure}
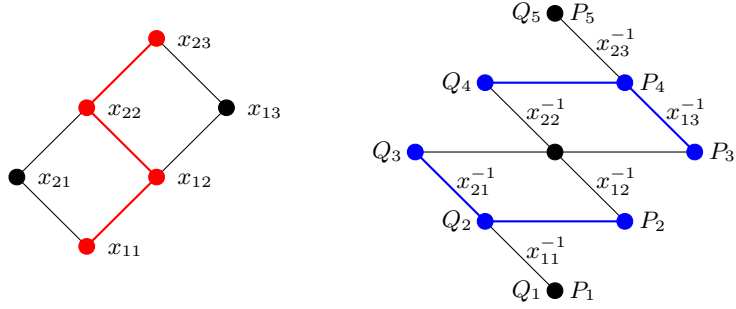

\medskip

The following result relates paths in $R$ to paths in $\mathcal{G}_R$.

\begin{Lemma} \label{lemma:gr}
There exists a bijection from $\mathcal P_R^{(k)}$ to $\mathcal S_{k+1,r+1}^{(s-k)}$ that divides weight by $w_R$.
\end{Lemma}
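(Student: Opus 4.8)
The plan is to read off from the weight condition exactly what the bijection must do, and then exhibit it explicitly. Recall that the weight of a family $\mathcal L' \in \mathcal S^{(s-k)}_{k+1,r+1}$ is the product of its edge weights, and that the only edges of $\mathcal G_R$ with nontrivial weight are the vertical edges $(i,j)\to(i+1,j)$, of weight $x_{ij}^{-1}$, which are in bijection with the elements of $R$. Hence $w(\mathcal L')$ is the product of $x_{ij}^{-1}$ over those $(i,j)\in R$ whose vertical edge is used by $\mathcal L'$. Since $w(\mathcal L)=\prod_{(i,j)\in\mathcal L}x_{ij}$ and $w_R=\prod_{(i,j)\in R}x_{ij}$, dividing weight by $w_R$ is the same as requiring that the vertical edges used by $\mathcal L'$ be exactly the elements of $R$ \emph{not} covered by $\mathcal L$. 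So it suffices to show that the map sending $\mathcal L$ to this complementary vertical-edge set is a well-defined bijection onto $\mathcal S^{(s-k)}_{k+1,r+1}$.

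To make this precise, I would encode $\mathcal L=(\pi_1,\dots,\pi_k)$ by its row profile: in each row $i$ the path $\pi_m$ covers a contiguous block of columns $[a_{m,i},b_{m,i}]$, where $a_{m,1}=m$, $b_{m,r}=s-k+m$, consecutive rows are glued by $a_{m,i+1}=b_{m,i}$ (set $a_{m,r+1}:=b_{m,r}$), and vertex-disjointness reads $b_{m,i}<a_{m+1,i}$. Let $O_i=\bigsqcup_m[a_{m,i},b_{m,i}]$ be the occupied columns of row $i$ and $U_i=[s]\setminus O_i$ its complement. I then build $\mathcal L'$ level by level: declare the occupied columns at level $i$ of $\mathcal G_R$ to be $C_i=[s]\setminus\{a_{1,i},\dots,a_{k,i}\}$, place a vertical edge on each column in $U_i$, and a diagonal edge on every other column of $C_i$. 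The heart of the verification is the identity that the columns of $C_i$ receiving a diagonal edge are exactly $\bigcup_m(a_{m,i},b_{m,i}]$, so that the vertical-edge columns are $C_i\setminus\bigcup_m(a_{m,i},b_{m,i}]=[s]\setminus\bigcup_m[a_{m,i},b_{m,i}]=U_i$, as required; moreover, since the dropped columns form disjoint blocks, each occupied column moves to itself or to itself minus one in an order-preserving fashion, so the resulting paths are vertex-disjoint. Finally $C_1=[s]\setminus\{1,\dots,k\}=\{k+1,\dots,s\}$ and $C_{r+1}=[s]\setminus\{s-k+1,\dots,s\}=\{1,\dots,s-k\}$ match the prescribed sources $P_{k+1},\dots,P_s$ and sinks $Q_{r+1},\dots,Q_{r+s-k}$, so $\mathcal L'\in\mathcal S^{(s-k)}_{k+1,r+1}$ with the desired weight.

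The inverse runs the same correspondence backwards: given $\mathcal L'$, let $C_i$ be its occupied columns at level $i$, list the complementary columns as $a_{1,i}<\dots<a_{k,i}$, and reconstruct the blocks $[a_{m,i},b_{m,i}]$ via $b_{m,i}=a_{m,i+1}$. I expect the main obstacle to be checking that these reconstructed blocks really assemble into vertex-disjoint monotone paths in $R$, that is, that $b_{m,i}<a_{m+1,i}$ for all $m,i$. This should follow from a prefix-counting argument: because each edge of $\mathcal G_R$ changes an occupied column by $0$ or $1$ and preserves order, the quantity $|C_i\cap[1,t]|$ is weakly increasing in the level for every threshold $t$, so the holes only move weakly to the right. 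The delicate point is upgrading this to the \emph{strict} inequality $b_{m,i}=a_{m,i+1}<a_{m+1,i}$; here the key observation is that $a_{m+1,i}$ is itself a hole at level $i$, so no occupied column sits on that threshold to cross it, which pins $|C_{i+1}\cap[1,a_{m+1,i}-1]|=|C_i\cap[1,a_{m+1,i}-1]|$ and forces the strict separation. Granting this, the two constructions are manifestly mutually inverse, and the weight computation of the first paragraph completes the proof.
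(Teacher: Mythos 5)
Your proof is correct, and it constructs the same underlying bijection as the paper---the complementation map under which the weighted (vertical) edges of $\mathcal L'$ are exactly the elements of $R$ not covered by $\mathcal L$---but your verification is genuinely different. The paper argues locally: each vertex of $R$, together with the five possible ways paths of $\mathcal L$ can pass through it, is a tile, and each tile is replaced by a tile built from edges of $\mathcal G_R$ that preserves exits on the southwest/northeast sides and inverts them on the southeast/northwest sides; the boundary behavior and bijectivity are then essentially automatic, since the tile substitution is visibly reversible. You argue globally, encoding $\mathcal L$ by the row-profile intervals $[a_{m,i},b_{m,i}]$, building $\mathcal L'$ level by level via complementation of the holes $\{a_{1,i},\dots,a_{k,i}\}$, and checking the transition $C_i\to C_{i+1}$ by set arithmetic. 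What your route buys is a figure-free, fully explicit treatment of exactly the points the tile argument leaves implicit: vertex-disjointness of the image family (your order-preservation remark) and, for the inverse, the strict separation $b_{m,i}=a_{m,i+1}<a_{m+1,i}$, which your prefix-count at the threshold $t=a_{m+1,i}-1$ handles correctly; what the paper's route buys is brevity and a transparent picture of why the map is an involution-like local substitution. One small point you should make explicit: the claim that the composition (inverse followed by forward map) returns the original $\mathcal L'$ requires knowing that a nonintersecting family in $\mathcal G_R$ is determined by its occupied column sets $C_1,\dots,C_{r+1}$; this follows from your own observation that each level-to-level matching moves columns by $0$ or $-1$ and is injective, hence is forced to be the unique order-preserving bijection $C_i\to C_{i+1}$. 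Stating that sentence closes the only gap.
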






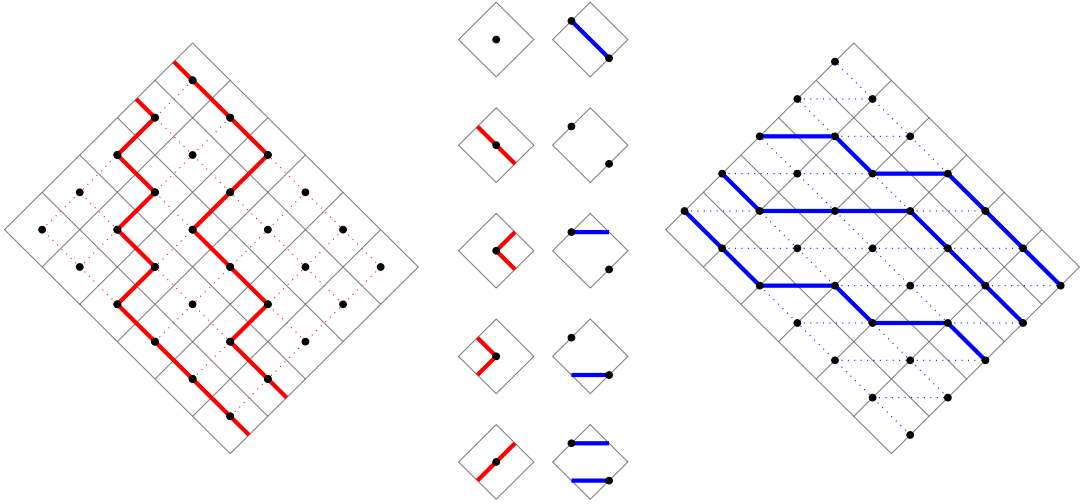
\begin{figure}
    \centering
    \begin{tikzpicture}
        \tikzstyle{w}=[circle, draw, fill=black, inner sep=0pt, minimum width=2.5pt]
        \begin{scope}[rotate=45,scale=.7]
        \draw[dotted,red] (0,0) grid (4,5);
        \draw[gray,thin,shift={(-.5,-.5)}] (0,0) grid (5,6);
        \draw[red, ultra thick] (0,-.5)--(0,3)--(1,3)--(1,4)--(2,4)--(2,5)--(3,5)--(3,5.5);
        \draw[red, ultra thick] (1,-.5)--(1,1)--(2,1)--(2,3)--(4,3)--(4,5.5);
        \foreach \x in {0,...,4}
            \foreach \y in {0,...,5}
                \node(\x\y)[w] at (\x,\y){};
        \foreach \x/\y in {0/0,0/1,0/2,0/3,1/3,1/4,2/4,2/5,3/5,1/0,1/1,2/1,2/2,2/3,3/3,4/3,4/4,4/5}
            \node[w] at (\x,\y){};
        \end{scope}
        
        \begin{scope}[shift={(3.5,4.5)},scale=.495]
        \draw[gray, thin] (0,0)--(1,1)--(0,2)--(-1,1)--(0,0);
        \node[w] at (0,1){};
        \draw[gray, thin] (2.5,0)--(3.5,1)--(2.5,2)--(1.5,1)--(2.5,0);
        \draw[blue, ultra thick] (3,.5)--(2,1.5);
        \node[w] at (3,.5){};
        \node[w] at (2,1.5){};
        \end{scope}
        \begin{scope}[shift={(3.5,3.1)},scale=.495]
        \draw[gray, thin] (0,0)--(1,1)--(0,2)--(-1,1)--(0,0);
        \draw[red, ultra thick] (.5,.5)--(-.5,1.5);
        \node[w] at (0,1){};
        \draw[gray, thin] (2.5,0)--(3.5,1)--(2.5,2)--(1.5,1)--(2.5,0);
        \node[w] at (3,.5){};
        \node[w] at (2,1.5){};
        \end{scope}
        \begin{scope}[shift={(3.5,1.7)},scale=.495]
        \draw[gray, thin] (0,0)--(1,1)--(0,2)--(-1,1)--(0,0);
        \draw[red, ultra thick] (.5,.5)--(0,1)--(.5,1.5);
        \node[w] at (0,1){};
        \draw[gray, thin] (2.5,0)--(3.5,1)--(2.5,2)--(1.5,1)--(2.5,0);
        \draw[blue, ultra thick] (3,1.5)--(2,1.5);
        \node[w] at (3,.5){};
        \node[w] at (2,1.5){};
        \end{scope}
        \begin{scope}[shift={(3.5,0.3)},scale=.495]
        \draw[gray, thin] (0,0)--(1,1)--(0,2)--(-1,1)--(0,0);
        \draw[red, ultra thick] (-.5,.5)--(0,1)--(-.5,1.5);
        \node[w] at (0,1){};
        \draw[gray, thin] (2.5,0)--(3.5,1)--(2.5,2)--(1.5,1)--(2.5,0);
        \draw[blue, ultra thick] (3,.5)--(2,.5);
        \node[w] at (3,.5){};
        \node[w] at (2,1.5){};
        \end{scope}
        \begin{scope}[shift={(3.5,-1.1)},scale=.495]
        \draw[gray, thin] (0,0)--(1,1)--(0,2)--(-1,1)--(0,0);
        \draw[red, ultra thick] (-.5,.5)--(0,1)--(.5,1.5);
        \node[w] at (0,1){};
        \draw[gray, thin] (2.5,0)--(3.5,1)--(2.5,2)--(1.5,1)--(2.5,0);
        \draw[blue, ultra thick] (3,1.5)--(2,1.5) (3,.5)--(2,.5);
        \node[w] at (3,.5){};
        \node[w] at (2,1.5){};
        \end{scope}
        
        \begin{scope}[shift={(8.7,0)},rotate=45,scale=.7]
        \draw[gray,thin,shift={(-.5,-.5)}] (0,0) grid (5,6);
        \foreach \x in {0,...,4}
            \draw[dotted,blue] (\x,-.5)--(\x,5.5);
        \foreach \x in {0,...,3}
            \foreach \y in {1,...,6}
                \draw[dotted,blue] (\x,\y-.5)--(\x+1,\y-1.5);
        \draw[blue, ultra thick] (2,-.5)--(2,.5)--(1,1.5)--(1,2.5)--(0,3.5)--(0,5.5);
        \draw[blue, ultra thick] (3,-.5)--(3,2.5)--(1,4.5)--(1,5.5);
        \draw[blue, ultra thick] (4,-.5)--(4,2.5)--(3,3.5)--(3,4.5)--(2,5.5);
        \foreach \x in {0,...,4}
            \foreach \y in {0,...,6}
                \node(\x\y)[w] at (\x,\y-.5){};
        \end{scope}
    \end{tikzpicture}
    \caption{Bijection between $\mathcal P_R^{(k)}$ and $\mathcal S_{k+1,r+1}^{(s-k)}$ as in Lemma~\ref{pathsinideal}. Tiles on the left are replaced with the corresponding tiles on the right.}
    \label{fig:5-vertex}
\end{figure}

\begin{proof}
In the drawing of $\mathcal L \in \mathcal P^{(k)}_R$, extend the start and end of each path by a half-edge from northwest to southeast. Such a drawing can be built by piecing together square tiles centered at each vertex showing the five ways in which the paths can pass through that vertex in such a way that the tiles match along sides---see Figure~\ref{fig:5-vertex}. We can replace each tile with a new tile containing edges/half-edges parallel to the edges of $\mathcal G_R$ in a way that preserves the exits along the southwest and northeast sides but inverts the exits along the southeast and northwest sides as shown in Figure~\ref{fig:5-vertex}. Placing vertices on the southeast and northwest edges of these new tiles, the resulting drawing will then depict a collection of paths $\mathcal L'$ in $\mathcal G_R$ that forms an element of $\mathcal S_{k+1,r+1}^{(s-k)}$.

The weighted edges in $\mathcal L'$ correspond to elements of $R$ not lying in any path of $\mathcal L$. It follows that $w(\mathcal L')$ is the inverse of the weight of the complement of $\mathcal L$ in $R$, which is exactly $\frac{w(\mathcal L)}{w_R}$.
\end{proof}

Let $A=(a_{ij})_{i,j=1}^{r+s}$ be the square matrix such that $a_{ij} = W_{ij}^{(1)}$, the total weight of all paths from $P_i$ to $Q_j$ in $\mathcal G_R$. By the Lindstr\"om-Gessel-Viennot Lemma  \cite{gesselviennot,lindstrom}, each minor $\det(A_{ij}^{(k)})$ is equal to $W_{ij}^{(k)}$, the total weight of $\mathcal S_{ij}^{(k)}$, the set of all collections of nonintersecting paths from $\{P_i, P_{i+1}, \dots, P_{i+k-1}\}$ to $\{Q_j, Q_{j+1}, \dots, Q_{j+k-1}\}$. By the discussion in Section~\ref{sec:octahedron}, the three-dimensional array $W=(W_{ij}^{(k)})$ formed by these minors satisfies the octahedron recurrence. See Figure~\ref{fig:array} for a depiction of $W$ when $R=[2] \times [3]$.

We will need a few simple properties of the array $W$.

\begin{Prop}\label{prop:wproperties}
Let $R=[r]\times [s]$, and let $W = (W_{ij}^{(k)})$ be the corresponding three-dimensional array. Assume $k > 0$ and $0 \leq i,j \leq r+s+1-k$.
\begin{enumerate}[(a)]
    \item If $W_{ij}^{(k)} \neq 0$, then $i \leq j \leq i+r$.
    \item If $i = j$, then $W_{ij}^{(k)}=1$.
    \item If $i < j$ and $k > s$, then $W_{ij}^{(k)}=0$.
\end{enumerate}
\end{Prop}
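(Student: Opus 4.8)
The plan is to read every $W_{ij}^{(k)}$ through the Lindström–Gessel–Viennot description already in hand: $W_{ij}^{(k)}$ is the total weight of $\mathcal{S}_{ij}^{(k)}$, the nonintersecting families of paths in $\mathcal{G}_R$ that (by planarity, order-preservingly) connect $P_{i+t}$ to $Q_{j+t}$ for $t=0,\dots,k-1$. Since every edge weight is a positive monomial, $W_{ij}^{(k)}$ is nonzero as a function of the $x_{ij}$ exactly when $\mathcal{S}_{ij}^{(k)}\neq\varnothing$. The engine of all three parts is a single structural fact about paths in $\mathcal{G}_R$, which I would record first. Along any edge the row index increases by exactly $1$, so a path meets each row in its range in exactly one vertex; and writing $\sigma(i,j)=i+j$, the quantity $\sigma$ is constant on the weight-$1$ (diagonal) edges and increases by $1$ on the weight-$x_{ij}^{-1}$ (vertical) edges. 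As $P_a$ and $Q_a$ both lie on the antidiagonal $\sigma=a+1$, a path from $P_a$ to $Q_b$ forces $\sigma$ to be nondecreasing, giving $a\le b$; moreover it uses exactly $b-a$ vertical edges, and this is at most the total number of edges, namely the row span $(r+1)-1=r$, so $a\le b\le a+r$. When $a=b$ no vertical edge is allowed, so the path must be the all-diagonal path sweeping the entire antidiagonal $\sigma=a+1$ from its top vertex $P_a$ to its bottom vertex $Q_a$; this path is unique and has weight $1$.

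Parts (a) and (b) then follow quickly. For (a), if $W_{ij}^{(k)}\neq 0$ then some family in $\mathcal{S}_{ij}^{(k)}$ exists, and its $t=0$ path runs from $P_i$ to $Q_j$; the structural fact gives $i\le j\le i+r$ at once. For (b), with $i=j$ the order-preserving matching pairs $P_{i+t}$ with $Q_{i+t}$, and each such path is the unique all-diagonal path of weight $1$ lying on the antidiagonal $\sigma=i+t+1$. Distinct $t$ give distinct antidiagonals, so these $k$ paths are automatically vertex-disjoint; hence $\mathcal{S}_{ii}^{(k)}$ is a single family of weight $1$ and $W_{ii}^{(k)}=1$.

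For part (c) I would argue by a pigeonhole count on one row. Because $k>s$, the hypothesis $j\le r+s+1-k$ gives $j\le r$, so row $j$ is a genuine row of $\mathcal{G}_R$ with exactly $s$ vertices. Computing the start and end rows of the paths shows that path $t$ passes through row $j$ precisely when $t\le (j-i)+s-1$; since $i<j$, the number of such $t$ in $\{0,\dots,k-1\}$ is $\min\bigl(k,(j-i)+s\bigr)\ge s+1$. But the paths are vertex-disjoint and each meets row $j$ in exactly one vertex, so at most $s$ of them can traverse row $j$ — a contradiction. Therefore $\mathcal{S}_{ij}^{(k)}=\varnothing$ and $W_{ij}^{(k)}=0$.

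The only cases left outside the path model are the boundary indices $i=0$ or $j=0$; these I would dispatch using the determinant conventions $W_{ij}^{(0)}=1$ and $W_{ij}^{(-1)}=0$ together with the octahedron recurrence and the interior cases just proved. I expect the main obstacle to be the bookkeeping in part (c): correctly identifying which paths cross row $j$ and verifying that the count strictly exceeds $s$ for every admissible pair $i<j$. Parts (a) and (b), by contrast, should be essentially immediate once the antidiagonal potential $\sigma$ and the row-by-row structure of $\mathcal{G}_R$ are set up.
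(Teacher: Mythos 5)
Your proof is correct and takes essentially the same route as the paper: both read $W_{ij}^{(k)}$ as the total weight of the nonintersecting path families $\mathcal S_{ij}^{(k)}$ in $\mathcal G_R$, get (a) and (b) directly from the structure of paths between the boundary points, and prove (c) by a pigeonhole argument on a single row of $\mathcal G_R$. The only cosmetic difference is that you pigeonhole on row $j$ (counting which of the $k$ paths must cross it), while the paper pigeonholes the $s+1$ starting points $P_i,\dots,P_{i+s}$ against the $s$ vertices of row $i+1$; the two counts are interchangeable.
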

\begin{proof}
All of these follow from the description of $W_{ij}^{(k)}$ as the total weight of $\mathcal S_{ij}^{(k)}$. For (a), there are no paths from $P_i$ to $Q_j$ if $i > j$ or $j-i>r$. For (b), if $i=j$, then $\mathcal S_{ij}^{(k)}$ has only one element consisting only of horizontal steps of weight $1$.

For (c), if $i < j$ and $k > s$, then the first $s+1$ starting points $P_i, \dots, P_{i+s}$ all lie at or below row $i+1$, while the ending points for these paths lie at or above row $j \geq i+1$. But row $i+1$ only has $s$ vertices, so it is impossible for $s+1$ nonintersecting paths to pass through it.
\end{proof}

We can translate Lemma~\ref{lemma:gr} into a statement expressing weights of certain collections of nonintersecting paths in $R$ in terms of the array of minors $W=(W_{ij}^{(k)})$. In fact, we can formulate a similar result for paths inside certain intervals $I \subseteq R$.

\begin{Cor}\label{pathsinideal}
Let $R = [r] \times [s]$, and let $I=[i_1,i_2] \times [j_1,j_2]$ be an interval in $R$ such that $i_1=1$ or $j_2=s$, and $j_1=1$ or $i_2=r$.
\begin{enumerate}[(a)]
    \item 
    There exists a bijection from $\mathcal P_I^{(k)}$ to $\mathcal S_{i_1+j_1+k-1,i_2+j_1}^{(j_2-j_1-k+1)}$ that divides weight by $w_I$, so that
    \[W_{i_1+j_1+k-1,i_2+j_1}^{(j_2-j_1-k+1)} = \frac{w_I^{(k)}}{w_I}.\]
    \item The following equality holds:
\[w_I^{(k)} 
= \frac{W_{i_1+j_1+k-1,i_2+j_1}^{(j_2-j_1-k+1)}}{W_{i_1+j_1-1,i_2+j_1}^{(j_2-j_1+1)}}.\]
\end{enumerate}
\end{Cor}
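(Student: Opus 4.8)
The plan is to deduce part (a) from Lemma~\ref{lemma:gr} by treating the interval $I = [i_1,i_2] \times [j_1,j_2]$ as a rectangle poset in its own right, of dimensions $r' \times s'$ with $r' = i_2 - i_1 + 1$ and $s' = j_2 - j_1 + 1$. Applying Lemma~\ref{lemma:gr} to this rectangle yields a bijection from $\mathcal{P}_I^{(k)}$ to the family $\mathcal{S}^{(s'-k)}_{k+1,\,r'+1}$ of nonintersecting path collections in the auxiliary graph $\mathcal{G}_I$ that divides weight by $w_I$. Since the weighted edges of $\mathcal{G}_I$ carry exactly the labels $x_{ij}^{-1}$ for $(i,j) \in I$, there is a natural inclusion $\mathcal{G}_I \hookrightarrow \mathcal{G}_R$ sending the weighted edge for $(i',j')$ to the weighted edge for $(i'+i_1-1,\,j'+j_1-1)$. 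The goal is to show that, after this inclusion, the collections in $\mathcal{S}^{(s'-k)}_{k+1,r'+1}$ extend uniquely to collections of $\mathcal{G}_R$-paths lying in $\mathcal{S}^{(j_2-j_1-k+1)}_{i_1+j_1+k-1,\,i_2+j_1}$, with no change in weight. A direct substitution confirms that the three indices specialize to $k+1$, $r+1$, and $s-k$ when $I=R$, recovering Lemma~\ref{lemma:gr} exactly; in general the $s'-k$ sources are $P_{i_1+j_1+k-1}, \dots, P_{i_1+j_2-1}$ and the sinks are $Q_{i_2+j_1}, \dots, Q_{i_2+j_2-k}$.

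The heart of the argument --- and the step I expect to be the main obstacle --- is verifying that every collection in $\mathcal{S}^{(j_2-j_1-k+1)}_{i_1+j_1+k-1,\,i_2+j_1}$ is confined to the image of $\mathcal{G}_I$ together with certain forced weight-$1$ edges, and that its start and end points are the asserted consecutive boundary vertices of $\mathcal{G}_R$. This is exactly where the two hypotheses on $I$ are used. Since each edge of $\mathcal{G}_R$ increases the first coordinate by $1$ and changes the second coordinate by $0$ or $-1$, every path strictly increases its first coordinate and weakly decreases its second, so it can never pass to a smaller first coordinate nor a larger second coordinate than its source. The condition ``$i_1 = 1$ or $j_2 = s$'' guarantees that the sources are genuine boundary vertices (on the top edge, or wrapping around the corner $(1,s)$ onto the right edge), and that every source lying above row $i_1$ is forced --- by the packing of consecutive sources, each blocking the weighted edge of the one directly below it --- to descend into row $i_1$ along weight-$1$ diagonal edges. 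A mirror-image argument using ``$j_1 = 1$ or $i_2 = r$'' forces the paths to leave $I$ along weight-$1$ edges to the consecutive sinks. Once confinement and the endpoint identification are established, the weight computation is identical to that in Lemma~\ref{lemma:gr}: the weighted edges used are precisely those corresponding to the elements of $I$ not covered by the associated $\mathcal{L} \in \mathcal{P}_I^{(k)}$, so $w(\mathcal{L}') = w(\mathcal{L})/w_I$, and summing over the bijection gives $W^{(j_2-j_1-k+1)}_{i_1+j_1+k-1,\,i_2+j_1} = w_I^{(k)}/w_I$.

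Before writing the general cascade I would pin down the bookkeeping on a small example, such as $I=[2,3]\times[1,4] \subseteq [3]\times[4]$ with $k=1$, where the sources $P_3,P_4,P_5 = (1,3),(1,4),(2,4)$ are visibly forced (by disjointness) to slide southwest into the rows of $I$, producing exactly the four collections whose total weight equals $w_I^{(1)}/w_I$. The only genuinely delicate point is packaging the two disjunctive hypotheses into a single clean confinement statement; I anticipate a short case split according to whether $i_1=1$ (sources automatically confined on the top and right) or $j_2=s$ (sources wrapping the corner), and dually for the sink side. An alternative route, should the reduction through $\mathcal{G}_I$ prove awkward, is to rerun the tile-replacement construction of Lemma~\ref{lemma:gr} directly inside $R$ and read off the boundary vertices of $\mathcal{G}_R$ from the hypotheses on $I$.

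Part (b) then follows immediately from part (a) by taking a ratio that cancels $w_I$. Applying (a) with the given $k$ gives the numerator $W^{(j_2-j_1-k+1)}_{i_1+j_1+k-1,\,i_2+j_1} = w_I^{(k)}/w_I$, while applying (a) with $k=0$ --- where $\mathcal{P}_I^{(0)}$ consists of the single empty collection, so $w_I^{(0)}=1$ --- gives the denominator $W^{(j_2-j_1+1)}_{i_1+j_1-1,\,i_2+j_1} = 1/w_I$. Dividing yields
\[
w_I^{(k)} = \frac{W^{(j_2-j_1-k+1)}_{i_1+j_1+k-1,\,i_2+j_1}}{W^{(j_2-j_1+1)}_{i_1+j_1-1,\,i_2+j_1}},
\]
as claimed. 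The only thing to check here is that the $k=0$ instance of (a) is legitimate, i.e.\ that $\mathcal{S}^{(j_2-j_1+1)}_{i_1+j_1-1,\,i_2+j_1}$ has a single element of weight $1/w_I$, namely the collection covering every weighted edge of $I$; this is immediate from the confinement argument above.
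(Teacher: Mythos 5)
Your proposal is correct and takes essentially the same route as the paper's proof: both reduce part (a) to Lemma~\ref{lemma:gr} applied to $I$ viewed as a rectangle, via the observation that nonintersecting paths between the stated sources and sinks are forced (by vertex-disjointness and the hypotheses $i_1=1$ or $j_2=s$, $j_1=1$ or $i_2=r$) to enter and exit the subgraph of $\mathcal G_R$ isomorphic to $\mathcal G_I$ along weight-$1$ edges, crossing rows $i_1$ and $i_2+1$ at exactly the consecutive vertices you identify. Part (b) is also handled identically, by combining the case $k$ with the case $k=0$ of part (a), where $w_I^{(0)}=1$ gives $W_{i_1+j_1-1,i_2+j_1}^{(j_2-j_1+1)} = 1/w_I$.
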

\begin{proof}
Any nonintersecting paths in $\mathcal G_R$ starting at $P_{i_1+j_1+k-1}, \dots, P_{i_1+j_2-1}$ must begin with horizontal steps until reaching row $i_1$ at points $(i_1, j_1+k), \dots, (i_1,j_2)$. Similarly, any nonintersecting paths ending at $Q_{i_2+j_1}, \dots, Q_{i_2+j_2-k}$ must pass through row $i_2+1$ at $(i_2+1, j_1), \dots, (i_2+1, j_2-k)$ and end with horizontal steps. The remaining parts of the paths lie inside the subgraph of $\mathcal G_R$ from rows $i_1$ through $i_2+1$ and columns $j_1$ through $j_2$, which is isomorphic as a weighted graph to $\mathcal G_I$. Part (a) then follows by applying Lemma~\ref{lemma:gr} to $I$ and summing over all paths.

For part (b), 
setting $k=0$ in part (a) gives $W^{(j_2-j_1+1)}_{i_1+j_1-1,i_2+j_1} = \frac{1}{w_I}$. Combining  with part (a) gives the result.
\end{proof}

In particular, note that the conditions of Corollary~\ref{pathsinideal} hold whenever $I$ is any order ideal or order filter of $R$.

\subsection{Birational rowmotion formula}

Using the relation between the octahedron recurrence and toggles, we will prove the following birational rowmotion formula.

\begin{Th}
\label{birationalrowmotionformula}
Let $R = [r] \times [s]$, $x \in \mathbb{R}_{>0}^{R}$, and $y = \phi^{-1}(x)$. 
Fix $(i,j) \in R$.



\begin{enumerate}[(a)]
\item If $0 \leq k \leq r+s-i-j$, then
\[\rho^{-k}(y)_{ij} = \frac{W_{k+2,i+k+1}^{(j-1)}}{W_{k+1,i+k+1}^{(j)}}.\]
\item For all $k \in \mathbb{Z}$,
\[\rho^{k}(y)_{ij} = \frac{1}{\rho^{k-i-j+1}(y)_{r+1-i,s+1-j}}.\]
In particular, if $0 < k < i+j$, then the right hand side can be computed using part (a).
\item For all $k \in \mathbb Z$, $\rho^{k+r+s}(y) = \rho^k(y)$. In other words, the action of rowmotion on $R$ has order $r+s$.
\end{enumerate}
\end{Th}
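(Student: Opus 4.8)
The plan is to prove the three parts in order, since (b) and (c) both build on the explicit formula in (a), and (a) is where the real work lies.

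\textbf{Part (a).}
I would prove this by induction on $k$, using Lemma~\ref{lemma:arraytoggle} to realize each application of $\rho^{-1}$ as a shift in the octahedron-recurrence array $W$. The key observation is that the claimed formula $\rho^{-k}(y)_{ij} = W_{k+2,i+k+1}^{(j-1)}/W_{k+1,i+k+1}^{(j)}$ is exactly the quantity $z_{ij}^{(k)}$ appearing in Lemma~\ref{lemma:arraytoggle}, with the indices matched up so that increasing $k$ by one corresponds to applying one inverse toggle. First I would establish the base case $k=0$: the formula should read $\rho^0(y)_{ij} = y_{ij} = \phi^{-1}(x)_{ij}$, and I would verify that $W_{2,i+1}^{(j-1)}/W_{1,i+1}^{(j)}$ equals $\phi^{-1}(x)_{ij}$ by unwinding the definition of $\phi^{-1}$ via Corollary~\ref{pathsinideal} applied to the interval $[1,i]\times[1,j]$ (an order ideal of $R$, so the hypotheses hold), identifying the maximizing chain sum with a ratio of path-weight polynomials $w_I^{(j)}/w_I^{(\text{something})}$ that telescopes into the stated minors. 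Then for the inductive step I would show that applying the inverse toggle $t_{ij}^{-1}$ (equivalently, interpreting the recursion of Lemma~\ref{lemma:arraytoggle}, which expresses $z_{ij}^{(k)}$ in terms of neighboring $z$'s at level $k-1$) matches the local birational toggle formula. The delicate point is bookkeeping: I must check that when toggles are composed in linear-extension order to form one $\rho^{-1}$, the elements already toggled sit at one level of the array and those not yet toggled sit at the adjacent level, exactly as in the proof of Lemma~\ref{lemma:dualtransfer}. The range restriction $0 \leq k \leq r+s-i-j$ is precisely the regime in which all the relevant minors are nonzero (by Proposition~\ref{prop:wproperties}), so no division-by-zero issues arise.

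\textbf{Part (b).}
This is a complementation/duality statement, and I would prove it using Lemma~\ref{lemma:dualtransfer} together with the symmetry of the octahedron recurrence. The relation $\rho^k(y)_{ij} = 1/\rho^{k-i-j+1}(y)_{r+1-i,s+1-j}$ reflects the antipodal symmetry of $R$ (sending $(i,j)$ to $(r+1-i,s+1-j)$) combined with the reciprocal relationship between $\rho\circ\phi^{-1}$ and $(\phi^*)^{-1}$ from Lemma~\ref{lemma:dualtransfer}. Concretely, I would translate both sides into ratios of entries of $W$ using part (a) (legitimate once $k$ and $k-i-j+1$ lie in the valid range), and then observe that the array $W$ is invariant under the reflection induced by reversing the roles of the $P$'s and $Q$'s in $\mathcal G_R$, which transposes start/end points and carries $W_{ij}^{(k)}$ to another entry of the same array. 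Matching indices should turn the right-hand side's reciprocal into the left-hand side. The phrase ``the right hand side can be computed using part (a)'' when $0 < k < i+j$ is the practical payoff: part (a) only covers nonpositive powers, and (b) extends the formula to a band of positive powers by reflecting them into the range covered by (a).

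\textbf{Part (c).}
Periodicity should follow formally once (a) and (b) are in hand. The cleanest route is to combine the two reflection-type identities: part (a) computes $\rho^{-k}$ for $k$ up to $r+s-i-j$, and part (b) relates positive powers at $(i,j)$ to powers at the antipode. Applying (b) twice—once to reflect a power into the range of (a), and once more using the antipodal involution—should produce a relation of the form $\rho^{k+r+s}(y)_{ij} = \rho^k(y)_{ij}$ after the index shifts $(i,j)\mapsto(r+1-i,s+1-j)\mapsto(i,j)$ accumulate a total shift of $-(i+j) - (r+s+2-i-j) + 2 = -(r+s)$ in the exponent, i.e. two applications of the antipodal map return to the start while the exponent decreases by exactly $r+s$. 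I expect the main obstacle to be Part (a), specifically the careful matching of indices in the inductive step and the verification of the base case; once the array realization is nailed down, parts (b) and (c) are essentially symmetry bookkeeping. Since $W$ is a genuine array of polynomials (not just a formal solution to the recurrence), all intermediate expressions are well-defined subtraction-free rational functions on $\mathbb{R}_{>0}^R$, so the identities, once verified as rational-function identities, hold on all positive labelings.
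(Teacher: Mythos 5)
Your plan for part (a) is essentially the paper's proof: induction on $k$, with Lemma~\ref{lemma:arraytoggle} driving the inductive step and Corollary~\ref{pathsinideal}(b) giving the base case $k=0$. One caveat: your claim that in the range $0 \leq k \leq r+s-i-j$ ``no division-by-zero issues arise'' is not accurate. Even inside this range, when $i=r$ or $j=s$ some of the neighboring quantities $z_{i+1,j}^{(k-1)}$, $z_{i,j+1}^{(k-1)}$ are genuinely undefined (their denominators, e.g.\ $W_{k,r+k+1}^{(j)}$, vanish by Proposition~\ref{prop:wproperties}), and the paper's inductive step must explicitly drop these terms from the parallel sum using the remarks following Lemma~\ref{lemma:arraytoggle}; similarly the cases $i=1$, $j=1$ need the values $z_{i0}^{(k)}=0$, $z_{0j}^{(k)}=\delta_{j1}$. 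This is more than bookkeeping, but your framework accommodates it. Part (c) is the paper's argument verbatim (apply (b) twice; your exponent arithmetic is correct).

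The genuine gap is in part (b). Your concrete plan is to ``translate both sides into ratios of entries of $W$ using part (a),'' but this is never possible: part (a) applies to $\rho^{k}(y)_{ij}$ only when $i+j-r-s \leq k \leq 0$, and to $\rho^{k-i-j+1}(y)_{r+1-i,s+1-j}$ only when $1 \leq k \leq i+j-1$, and these two ranges are disjoint. This disjointness is the entire point of (b): the left-hand side is a \emph{positive} power of rowmotion, which the array $W$ does not directly compute, so no reflection symmetry of $W$ (which in any case permutes entries only up to an antipodal relabeling of $x$) can close the gap --- the problem is not matching two expressions in $W$, but producing any expression in $W$ for the left side at all. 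The paper's proof instead (i) reduces to $k=1$ via the substitution $z = \rho^{k-1}(y)$, which is also what yields the statement for \emph{all} $k \in \mathbb{Z}$ (needed later in (c)); (ii) applies part (a) only to the right-hand side and identifies it, via Corollary~\ref{pathsinideal}, as $w^{(1)}_{[i,r]\times[j,s]}(x)$, the total weight of maximal chains in the order filter, which is exactly $(\phi^*)^{-1}(x)_{ij}$; and (iii) invokes Lemma~\ref{lemma:dualtransfer} to equate this with $1/\rho(y)_{ij}$. You cite Lemma~\ref{lemma:dualtransfer} as an ingredient but never actually deploy it; it is the only available handle on the positive power $\rho(y)$, so as written your argument for (b) does not go through, and the gap propagates to (c) and to the ``all $k \in \mathbb{Z}$'' claim.
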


\begin{figure}
\begin{tikzpicture}[scale = 0.9]

\begin{scope}[scale=1.2,shift={(-6,7.5)},rotate=135]
\draw (0,0)--(1,0);
\draw (0,-1)--(1,-1);
\draw (0,-2)--(1,-2);

\draw (0,0)--(0,-2);
\draw (1,0)--(1,-2);

\draw [fill=black] (0,0) circle [radius=0.08cm];
\draw [fill=black] (1,0) circle [radius=0.08cm];

\draw [fill=black] (0,-1) circle [radius=0.08cm];
\draw [fill=black] (1,-1) circle [radius=0.08cm];

\draw [fill=black] (0,-2) circle [radius=0.08cm];
\draw [fill=black] (1,-2) circle [radius=0.08cm];

\begin{small}
\node at (-0.2,-0.2) {$a$};
\node at (-0.2,-1.2) {$c$};
\node at (-0.2,-2.2) {$e$};

\node at (0.8,-0.2) {$b$};
\node at (0.8,-1.2) {$d$};
\node at (0.8,-2.2) {$f$};
\end{small}
\end{scope}

\begin{scope}[scale = 1.2, shift={(3,7.0)}, rotate = 135]
\draw (0,0)--(1,0);
\draw (1,0)--(2,0);
\draw (0,-1)--(2,-1);
\draw (0,-2)--(1,-2);
\draw (1,-2)--(2,-2);

\draw (0,-1)--(1,0);
\draw (0,-2)--(2,0);
\draw (1,-2)--(2,-1);

\draw [fill=black] (0,0) circle [radius=0.08cm];
\draw [fill=black] (1,0) circle [radius=0.08cm];
\draw [fill=black] (2,0) circle [radius=0.08cm];

\draw [fill=black] (0,-1) circle [radius=0.08cm];
\draw [fill=black] (1,-1) circle [radius=0.08cm];
\draw [fill=black] (2,-1) circle [radius=0.08cm];

\draw [fill=black] (0,-2) circle [radius=0.08cm];
\draw [fill=black] (1,-2) circle [radius=0.08cm];
\draw [fill=black] (2,-2) circle [radius=0.08cm];

\begin{scriptsize}
\node at (-0.2,-0.2) {$P_1$};
\node at (-0.2,-1.2) {$P_2$};
\node at (-0.2,-2.2) {$P_3$};
\node at (0.8,-2.2) {$P_4$};
\node at (1.8,-2.2) {$P_5$};

\node at (0.2,0.2) {$Q_1$};
\node at (1.2,0.2) {$Q_2$};
\node at (2.2,0.2) {$Q_3$};
\node at (2.2,-0.8) {$Q_4$};
\node at (2.2,-1.8) {$Q_5$};

\node at (0.39,-0.18) {$\overline{a}$};
\node at (1.39,-0.18) {$\overline{b}$};

\node at (0.39,-1.18) {$\overline{c}$};
\node at (1.39,-1.18) {$\overline{d}$};

\node at (0.39,-2.18) {$\overline{e}$};
\node at (1.39,-2.21) {$\overline{f}$};
\end{scriptsize}
\end{scope}

\draw [red,thick] (-1.625,-0.25)--(-2,0.5);
\draw [red,thick] (-3.625,0.25)--(-4,1);

\draw [red,thick] (-1,1)--(-2.5,4);
\draw [red,thick] (-3,1.5)--(-4.5,4.5);

\draw [red,thick] (-1.5,4.5)--(-3,7.5);
\draw [red,thick] (-3.5,5)--(-5,8);

\draw [blue,thick] (-2.625,0.75)--(-3,1.5);
\draw [blue,thick] (-4.625,1.25)--(-5,2);

\draw [blue,thick] (-2,2)--(-3.5,5);
\draw [blue,thick] (-4,2.5)--(-5.5,5.5);

\draw [blue,thick] (-2.5,5.5)--(-4,8.5);

\draw [fill=black] (0,0) circle [radius=0.08cm];
\draw [fill=black] (-2,0.5) circle [radius=0.08cm];
\draw [fill=black] (-4,1) circle [radius=0.08cm];
\draw [fill=black] (-6,1.5) circle [radius=0.08cm];
\draw [fill=black] (-8,2) circle [radius=0.08cm];

\draw [fill=black] (1,0.5) circle [radius=0.08cm];
\draw [fill=black] (-1,1) circle [radius=0.08cm];
\draw [fill=black] (-3,1.5) circle [radius=0.08cm];
\draw [fill=black] (-5,2) circle [radius=0.08cm];
\draw [fill=black] (-7,2.5) circle [radius=0.08cm];

\draw [fill=black] (2,1) circle [radius=0.08cm];
\draw [fill=black] (0,1.5) circle [radius=0.08cm];
\draw [fill=black] (-2,2) circle [radius=0.08cm];
\draw [fill=black] (-4,2.5) circle [radius=0.08cm];
\draw [fill=black] (-6,3) circle [radius=0.08cm];

\draw [fill=black] (3,1.5) circle [radius=0.08cm];
\draw [fill=black] (1,2) circle [radius=0.08cm];
\draw [fill=black] (-1,2.5) circle [radius=0.08cm];
\draw [fill=black] (-3,3) circle [radius=0.08cm];
\draw [fill=black] (-5,3.5) circle [radius=0.08cm];

\draw [fill=black] (4,2) circle [radius=0.08cm];
\draw [fill=black] (2,2.5) circle [radius=0.08cm];
\draw [fill=black] (0,3) circle [radius=0.08cm];
\draw [fill=black] (-2,3.5) circle [radius=0.08cm];
\draw [fill=black] (-4,4) circle [radius=0.08cm];

\draw [fill=black] (-0.5,3.5) circle [radius=0.08cm];
\draw [fill=black] (-2.5,4) circle [radius=0.08cm];
\draw [fill=black] (-4.5,4.5) circle [radius=0.08cm];
\draw [fill=black] (-6.5,5) circle [radius=0.08cm];

\draw [fill=black] (0.5,4) circle [radius=0.08cm];
\draw [fill=black] (-1.5,4.5) circle [radius=0.08cm];
\draw [fill=black] (-3.5,5) circle [radius=0.08cm];
\draw [fill=black] (-5.5,5.5) circle [radius=0.08cm];

\draw [fill=black] (1.5,4.5) circle [radius=0.08cm];
\draw [fill=black] (-0.5,5) circle [radius=0.08cm];
\draw [fill=black] (-2.5,5.5) circle [radius=0.08cm];
\draw [fill=black] (-4.5,6) circle [radius=0.08cm];

\draw [fill=black] (2.5,5) circle [radius=0.08cm];
\draw [fill=black] (0.5,5.5) circle [radius=0.08cm];
\draw [fill=black] (-1.5,6) circle [radius=0.08cm];
\draw [fill=black] (-3.5,6.5) circle [radius=0.08cm];

\draw [fill=black] (-1,7) circle [radius=0.08cm];
\draw [fill=black] (-3,7.5) circle [radius=0.08cm];
\draw [fill=black] (-5,8) circle [radius=0.08cm];

\draw [fill=black] (0,7.5) circle [radius=0.08cm];
\draw [fill=black] (-2,8) circle [radius=0.08cm];
\draw [fill=black] (-4,8.5) circle [radius=0.08cm];

\draw [fill=black] (1,8) circle [radius=0.08cm];
\draw [fill=black] (-1,8.5) circle [radius=0.08cm];
\draw [fill=black] (-3,9) circle [radius=0.08cm];

\draw [fill=black] (-1.5,10.5) circle [radius=0.08cm];
\draw [fill=black] (-3.5,11) circle [radius=0.08cm];

\draw [fill=black] (-2.5,11.5) circle [radius=0.08cm];
\draw [fill=black] (-0.5,11) circle [radius=0.08cm];

\draw [fill=black] (-2,14) circle [radius=0.08cm];

\draw [dotted] (0,0)--(-8,2)--(-4,4)--(4,2)--cycle;
\draw [dotted] (-0.5,3.5)--(-6.5,5)--(-3.5,6.5)--(2.5,5)--cycle;
\draw [dotted] (-1,7)--(-5,8)--(-3,9)--(1,8)--cycle;
\draw [dotted] (-1.5,10.5)--(-3.5,11)--(-2.5,11.5)--(-0.5,11)--cycle;


\draw [dotted] (0,0)--(-2,14);
\draw [dotted] (-8,2)--(-2,14);
\draw [dotted] (-4,4)--(-2,14);
\draw [dotted] (4,2)--(-2,14);

\begin{footnotesize}
\node at (0.4,0) {$1$};
\node at (-1.5,0.5) {$\overline{a}$};
\node at (-3.5,1) {$\overline{ab}$};
\node at (-5.6,1.5) {$0$};
\node at (-7.6,2) {$0$};

\node at (1.3,0.5) {$0$};
\node at (-0.7,1) {$1$};
\node at (-2.4,1.5) {$\overline{b} + \overline{c}$};
\node at (-4.6,2) {$\overline{cd}$};
\node at (-6.6,2.5) {$0$};

\node at (2.3,1) {$0$};
\node at (0.3,1.5) {$0$};
\node at (-1.7,2) {$1$};
\node at (-3.4,2.5) {$\overline{d} + \overline{e}$};
\node at (-5.6,3) {$\overline{ef}$};

\node at (3.3,1.5) {$0$};
\node at (1.3,2) {$0$};
\node at (-0.7,2.5) {$0$};
\node at (-2.7,3) {$1$};
\node at (-4.7,3.5) {$\overline{f}$};

\node at (4.3,2) {$0$};
\node at (2.3,2.5) {$0$};
\node at (0.3,3) {$0$};
\node at (-1.7,3.5) {$0$};
\node at (-3.7,4) {$1$};

\node at (-0.2,3.5) {$1$};
\node at (-2.1,4) {$\overline{ac}$};
\node at (-4.0,4.5) {$\overline{abcd}$};
\node at (-6.1,5) {$0$};

\node at (0.8,4) {$0$};
\node at (-1.2,4.5) {$1$};
\node at (-2.3,5) {$\overline{bd} + \overline{be} + \overline{ce}$};
\node at (-4.9,5.5) {$\overline{cdef}$};

\node at (1.8,4.5) {$0$};
\node at (-0.2,5) {$0$};
\node at (-2.2,5.5) {$1$};
\node at (-4.1,6) {$\overline{df}$};

\node at (2.8,5) {$0$};
\node at (0.8,5.5) {$0$};
\node at (-1.1,6) {$0$};
\node at (-3.2,6.5) {$1$};

\node at (-0.7,7) {$1$};
\node at (-2.6,7.5) {$\overline{ace}$};
\node at (-4.3,8) {$\overline{abcdef}$};

\node at (0.3,7.5) {$0$};
\node at (-1.7,8) {$1$};
\node at (-3.6,8.5) {$\overline{bdf}$};

\node at (1.3,8) {$0$};
\node at (-0.7,8.5) {$0$};
\node at (-2.7,9) {$1$};

\node at (-1.2,10.5) {$1$};
\node at (-3.2,11) {$0$};

\node at (-0.2,11) {$0$};
\node at (-2.2,11.5) {$1$};

\node at (-1.7,14) {$1$};

\node at (-0.4,-0.2) {$Q_1$};
\node at (-2.4,0.3) {$Q_2$};
\node at (-4.4,0.8) {$Q_3$};
\node at (-6.4,1.3) {$Q_4$};
\node at (-8.4,1.8) {$Q_5$};

\node at (0.6,-0.3) {$P_1$};
\node at (1.6,0.2) {$P_2$};
\node at (2.6,0.7) {$P_3$};
\node at (3.6,1.2) {$P_4$};
\node at (4.6,1.7) {$P_5$};
\end{footnotesize}

\end{tikzpicture}
\caption{$R=[2] \times [3]$, $\mathcal G_R$, and the corresponding array $W=W_{ij}^{(k)}$ for $1 \leq k \leq 5$, where $\overline{a}$ denotes $a^{-1}$ for readability. (All entries at height $0$ equal $1$, and all other entries not shown are $0$.) Red and blue lines indicate quotients used to compute $\rho^0(y)=y=\phi^{-1}(x)$ and $\rho^{-1}(y)$ (apart from the topmost label), respectively.}
\label{fig:array}
\end{figure}

Theorem~\ref{birationalrowmotionformula} can be used to find an explicit formula for the entries of any power of $\rho$ applied to $y$. (Using parts (a) and (b) one can compute $\rho^{k}(y)_{ij}$ whenever $i+j-r-s \leq k < i+j$, and part (c) can be used to bring $k$ into this range.) While the birational rowmotion formulas given in \cite{musikerroby} are stated in terms of complements of paths in $R$, the formulation given here can be seen to be equivalent using Corollary~\ref{pathsinideal}.

Geometrically, Theorem~\ref{birationalrowmotionformula}(a) states that one can find the values of $\rho^{-k}(y)_{ij}$ as quotients of nearby entries inside the array $W$, and increasing the value of $k$ corresponds to a translation in the direction $(1,1,0)$ as long as these entries remain inside the defined pyramidal region of $W$. Upon passing outside this region, one needs to use part (b) to relocate inside the pyramid to the entries corresponding to the antipodal point of $R$. See Figure~\ref{fig:array}.

\begin{Ex}
Let $R = [2] \times [3]$. Let $x$ be the labeling of $R$ as in Figure~\ref{fig:array} with the array $(W_{ij}^{(k)})$ as previously described. When $(i,j)=(2,2)$, Theorem \ref{birationalrowmotionformula}(a) gives
\begin{align*}
    \rho^{0}(y)_{22} &= \frac{W_{23}^{(1)}}{W_{13}^{(2)}} =  \frac{b^{-1} + c^{-1}}{(abcd)^{-1}} = acd + abd,\\
    \rho^{-1}(y)_{22} &= \frac{W_{34}^{(1)}}{W_{24}^{(2)}} = \frac{d^{-1} + e^{-1}}{(cdef)^{-1}} = cef + cdf.
\end{align*}

To compute $\rho^{-2}(y)_{22}$, $r+s-i-j = 1 < 2$, so part (a) does not apply. Instead, we apply parts (c) and (b) first to obtain
\begin{align*}
    \rho^{-2}(y)_{22} &= \rho^{3}(y)_{22} = \frac{1}{\rho^{0}(y)_{12}} = \frac{W_{12}^{(2)}}{W_{22}^{(1)}}\\
    &= (ac)^{-1}, \\
    \rho^{-3}(y)_{22} &= \rho^2(y)_{22} = \frac{1}{\rho^{-1}(y)_{12}} = \frac{W_{23}^{(2)}}{W_{33}^{(1)}} \\&= (bd)^{-1}+(be)^{-1}+(ce)^{-1},\\
    \rho^{-4}(y)_{22} &= \rho(y)_{22} = \frac{1}{\rho^{-2}(y)_{12}} =
    \frac{W_{34}^{(2)}}{W_{44}^{(1)}} \\&= (df)^{-1}.
\end{align*}
\end{Ex}

We will prove each part of Theorem~\ref{birationalrowmotionformula} separately. Part (a) follows primarily from Lemma~\ref{lemma:arraytoggle}, though some care is needed along the boundary of $R$.

\begin{proof}[Proof of Theorem~\ref{birationalrowmotionformula}(a)]
Let $z_{ij}^{(k)} = \frac{W_{k+2,i+k+1}^{(j-1)}}{W_{k+1,i+k+1}^{(j)}}$, which we wish to equal $\rho^{-k}(y)_{ij}$ for $0 \leq k \leq r+s-i-j$. We proceed by induction on $k$. When $k = 0$, $\rho^0(y)_{ij} = y_{ij} = \phi^{-1}(x)_{ij}$ is the total weight of all maximal chains in $[i] \times [j]$ (with respect to the labeling $x$). By Corollary~\ref{pathsinideal}(b), this is $\frac{W_{2,i+1}^{(j-1)}}{W_{1,i+1}^{(j)}} = z_{ij}^{(0)}$, as desired.

For the inductive step, assume $k > 0$ and suppose that the claim is true for $\rho^{-k+1}(y)$ as well as for $\rho^{-k}(y)_{(i',j')}$ if $(i',j') \prec (i,j)$. The value of $\rho^{-k}(y)$ is obtained by applying toggles from bottom to top on $\rho^{-k+1}(y)$, so the toggle at $(i,j)$ gives
\[\rho^{-k}(y)_{ij} = \left(\sum_{(i',j')\lessdot (i,j)} \rho^{-k}(y)_{i'j'}\right)\left(\sideset{}{^\parallel}\sum_{(i',j') \gtrdot (i,j)}\rho^{-k+1}(y)_{i'j'}\right) \cdot \frac{1}{\rho^{-k+1}(y)_{ij}}, \tag{$*$} \label{eq:toggle}\]
where an empty sum is replaced with $1$. 

We claim that we can replace the first sum in \eqref{eq:toggle} with $z_{i,j-1}^{(k)}+z_{i-1,j}^{(k)}$. If $i > 1$ and $j > 1$, then this is immediate by induction. Otherwise note that
\[
    z_{i0}^{(k)} = \frac{W_{k+2,i+k+1}^{(-1)}}{W_{k+1,i+k+1}^{(0)}} = \frac01= 0,\qquad
    z_{0j}^{(k)} = \frac{W_{k+2,k+1}^{(j-1)}}{W_{k+1,k+1}^{(j)}} = \frac{\delta_{j1}}{1} = \delta_{j1}
\]
by Proposition~\ref{prop:wproperties}(a) and (b).
Thus if exactly one of $i$ and $j$ is $1$, then one of $z_{i,j-1}^{(k)}$ and $z_{i-1,j}^{(k)}$ equals $0$ while the other is the only term in the first sum in \eqref{eq:toggle} by induction. If instead $i=j=1$, then the empty sum in \eqref{eq:toggle} is replaced with $1=0 + \delta_{11} = z_{10}^{(k1)} + z_{01}^{(k)}$ as needed.

Similarly, we claim that we can replace the second (parallel) sum in \eqref{eq:toggle} with either $z_{i,j+1}^{(k-1)} \parallel z_{i+1,j}^{(k-1)}$, or just one of these terms if the other is undefined. If $i < r$ and $j < s$, then this is again immediate by induction (since the inequality $k-1 \leq r+s-(i+j+1)$ holds). If $i=r$ and $j < s$, then $z_{r+1,j}^{(k-1)}$ is undefined since $W_{k,r+k+1}^{(j)}=0$ by Proposition~\ref{prop:wproperties}(a), so we are left with $z_{r,j+1}^{(k-1)} = \rho^{-k+1}(y)_{r,j+1}$ by induction. If instead $i < r$ and $j=s$, then $z_{i,s+1}^{(k-1)}$ is undefined since $W_{k,i+k}^{(s+1)} = 0$ by Proposition~\ref{prop:wproperties}(c), which leaves just $z_{i+1,s}^{(k-1)} = \rho^{-k+1}(y)_{i+1,s}$ by induction. Finally, $i=r$ and $j=s$ cannot both occur since $0 < k \leq r+s-i-j$.

The final factor in \eqref{eq:toggle} is equal to $\frac{1}{z^{(k-1)}_{ij}}$ by induction. The result then follows by comparing \eqref{eq:toggle} to Lemma~\ref{lemma:arraytoggle}.
\end{proof}

Next we prove part (b). In some sense, this reflects two symmetries: toggling is respected by dualizing a poset and inverting each label, and the octahedron recurrence is respected by reflecting over the plane perpendicular to the main diagonal.

\begin{proof}[Proof of Theorem~\ref{birationalrowmotionformula}(b)]
Note that if $0 < k < i+j$, then $0 \leq i+j-k-1 \leq r+s-(r+1-i)-(s+1-j)$, so the right hand side of (b) will satisfy the condition of part (a), as claimed.

Via the change of coordinates $z = \rho^{k-1}(y)$, it suffices to prove the case when $k=1$. 
By part (a),
\[\rho^{2-i-j}(y)_{r+1-i,s+1-j} = 
    \frac{W_{i+j,r+j}^{(s-j)}}{W_{i+j-1,r+j}^{(s-j+1)}}.\]
By Corollary~\ref{pathsinideal}, this is the total weight of all maximal chains in $[i,r] \times [j,s]$ in $R$ (with respect to the labeling $x$). Thus
\[\rho^{2-i-j}(y)_{r+1-i,s+1-j} = \left(\phi^*\right)^{-1}(x)_{ij} = \frac{1}{\rho(y)_{ij}}\]
by Lemma~\ref{lemma:dualtransfer}.
\end{proof}











%
%

Finally, we deduce part (c) from part (b).
%

\begin{proof} [Proof of Theorem~\ref{birationalrowmotionformula}(c)]
Apply Theorem~\ref{birationalrowmotionformula}(b) twice:
\begin{align*}
    \rho^{k}(y)_{i,j} &= \left(\rho^{k-i-j+1}(y)_{r+1-i,s+1-j}\right)^{-1}\\ 
    &= \left[\left(\rho^{k-i-j+1-(r+1-i)-(s+1-j)+1}(y)_{i,j}\right)^{-1}\right]^{-1}\\
    & = \rho^{k-r-s}(y)_{i,j}.\qedhere
\end{align*}
\end{proof}

In the next two sections, we will show how to use the perspective relating promotion and the octahedron recurrence to prove various identities about rowmotion.


\section{A Chain Shifting Lemma} \label{section:chainshifting}

In this section we will use the birational rowmotion formula to derive a chain shifting lemma, which is a birational generalization of the rowmotion action on the Stanley-Thomas word appearing in \cite{josephroby1}. 

\subsection{Chain Shifting}

Recall from Section~\ref{sec:paths} that if $I$ is an interval of $R$ and $x$ is a labeling of $R$, then $w_I^{(k)}(x)$ is the total weight of the collections of $k$ nonintersecting paths in $\mathcal P_I^{(k)}$. We will prove the following lemma relating these sums in two labelings $x$ and $\phi \circ \rho^{-1} \circ \phi^{-1}(x)$.


\begin{Lemma}[Chain Shifting] \label{chainshift}
Let $R = [r] \times [s]$ and let $1 < u \leq v \leq s$. Define intervals $I = [r] \times [u,v]$ and $I' = [r] \times [u-1,v-1]$ of $R$. Then for any labeling $x \in \mathbb R^{R}_{>0}$, \[w_{I}^{(k)}(x) = w_{I'}^{(k)}(\phi \circ \rho^{-1} \circ \phi^{-1}(x)).\]
(The symmetric statement obtained by reflecting $R$ also holds.)
%
%
\end{Lemma}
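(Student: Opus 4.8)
The plan is to translate both sides of the Chain Shifting Lemma into the language of the array $W = (W_{ij}^{(k)})$ using Corollary~\ref{pathsinideal}, and then show that the desired equality is simply a consequence of how the birational rowmotion formula (Theorem~\ref{birationalrowmotionformula}) repositions entries inside $W$. The key observation is that $\phi \circ \rho^{-1} \circ \phi^{-1}$ is the conjugate of $\rho^{-1}$ by the transfer map $\phi$, and by Theorem~\ref{birationalrowmotionformula}(a) the entries of $\rho^{-k}(y)$ (where $y = \phi^{-1}(x)$) are read off as quotients of nearby entries of $W$, with increasing $k$ corresponding to translation in the direction $(1,1,0)$. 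So I expect the statement to follow by comparing the array $W = W(x)$ associated to $x$ with the array $\widetilde W = W(\tilde x)$ associated to $\tilde x = \phi \circ \rho^{-1} \circ \phi^{-1}(x)$, and identifying $\widetilde W$ as a translate of $W$.

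\textbf{First} I would apply Corollary~\ref{pathsinideal} to the interval $I = [r] \times [u,v]$. Here $i_1 = 1$, $i_2 = r$, $j_1 = u$, $j_2 = v$, so the hypotheses ($i_1 = 1$ and $i_2 = r$) are satisfied. This expresses $w_I^{(k)}(x)$ as a quotient of two entries of the array $W = W(x)$, specifically
\[
w_I^{(k)}(x) = \frac{W_{u+k,r+u}^{(v-u-k+1)}}{W_{u-1,r+u}^{(v-u+1)}}.
\]
Likewise, for $I' = [r] \times [u-1,v-1]$ I would write $w_{I'}^{(k)}(\tilde x)$ as the analogous quotient of entries of $\widetilde W = W(\tilde x)$, with the column-index shift $u \mapsto u-1$, $v \mapsto v-1$ producing indices of the same shape but shifted.

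\textbf{The crux} is then to establish the relationship between $\widetilde W$ and $W$. Since $\tilde x = \phi \circ \rho^{-1} \circ \phi^{-1}(x)$, I would use Theorem~\ref{birationalrowmotionformula}(a) to recognize that labeling a rectangle by $\tilde y = \phi^{-1}(\tilde x) = \rho^{-1} \circ \phi^{-1}(x) = \rho^{-1}(y)$ corresponds, via the formula, to quotients of entries of $W$ shifted by the translation $(1,1,0)$. The heart of the argument is to package this into a clean statement that $\widetilde W_{ij}^{(k)}$ is (up to the overall normalization coming from $w_R$ and matching of boundary values) equal to $W_{i+1,j+1}^{(k)}$, so that the column shift $u \mapsto u-1$ in passing from $I$ to $I'$ exactly compensates for the index shift induced by applying $\rho^{-1}$. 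Once $\widetilde W$ is identified as this translate of $W$, both sides of the claimed identity become the \emph{same} quotient of entries of $W$, and the equality follows. \textbf{I expect the main obstacle} to be bookkeeping along the boundary of the pyramidal region where $W$ is defined: the translation argument is transparent in the interior, but one must verify that the relevant entries (in particular the denominators $W_{u-1,r+u}^{(v-u+1)}$ and their tilded counterparts) stay inside the defined region, or else invoke Theorem~\ref{birationalrowmotionformula}(b) to relocate via the antipodal-point symmetry, as well as confirm that the edge cases allowed by the hypothesis $1 < u$ do not cause the quotients to pick up spurious zeros from Proposition~\ref{prop:wproperties}.
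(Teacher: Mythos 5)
Your overall strategy coincides with the paper's: translate both sides into quotients of entries of the arrays $W = W(x)$ and $\widetilde W = W(\tilde x)$ via Corollary~\ref{pathsinideal}(b), and reduce the lemma to the translation claim $\widetilde W_{ij}^{(k)} = W_{i+1,j+1}^{(k)}$, under which the column shift $u \mapsto u-1$ exactly cancels the index shift. That is precisely the paper's proof skeleton. However, your treatment of the crux has a real gap. Theorem~\ref{birationalrowmotionformula}(a) only equates \emph{quotients} of consecutive-height entries of $\widetilde W$ with the corresponding shifted quotients of $W$ (via $\rho^{-k}(\widetilde y) = \rho^{-k-1}(y)$); by itself this does not force equality of the individual entries, which is exactly why you felt compelled to hedge with ``up to the overall normalization coming from $w_R$.'' In fact no normalization factor appears: the identity $\widetilde W_{ij}^{(k)} = W_{i+1,j+1}^{(k)}$ holds on the nose for $1 \le i,j \le r+s-k$. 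The paper pins this down with two ingredients missing from your sketch: (1) reduce to $k=1$, since by Dodgson condensation/LGV the height-$k$ entries are determinants of $k \times k$ submatrices of the height-$1$ entries, so the shift at height $1$ implies it at all heights; and (2) at height $1$, exploit the fact that the numerator in Theorem~\ref{birationalrowmotionformula}(a) for column $j'=1$ sits at height $0$ and therefore equals $1$, giving the anchored identities $\widetilde W_{ij}^{(1)} = 1/\rho^{-(i-1)}(\widetilde y)_{j-i,1} = 1/\rho^{-i}(y)_{j-i,1} = W_{i+1,j+1}^{(1)}$ for $i < j$, with the degenerate cases $i \ge j$ and $j-i > r$ disposed of by Proposition~\ref{prop:wproperties} (as you anticipated). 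Without some such anchoring-plus-propagation mechanism, the ``clean statement'' you want to package remains unproven.

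A minor correction as well: with $i_1 = 1$, $j_1 = u$, Corollary~\ref{pathsinideal}(b) gives the denominator $W_{u,\,r+u}^{(v-u+1)}$ (first index $i_1 + j_1 - 1 = u$), not $W_{u-1,\,r+u}^{(v-u+1)}$. The slip is harmless to the architecture --- with the corrected indices, $w_I^{(k)}(x) = W_{u+k,\,r+u}^{(v-u-k+1)} \big/ W_{u,\,r+u}^{(v-u+1)}$ and $w_{I'}^{(k)}(\tilde x) = \widetilde W_{u+k-1,\,r+u-1}^{(v-u-k+1)} \big/ \widetilde W_{u-1,\,r+u-1}^{(v-u+1)}$, and the translation claim identifies the two quotients entry by entry --- but it is worth fixing so that the bookkeeping you rightly worry about at the boundary comes out correctly.
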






Note that the intervals $I$ and $I'$ differ only by shifting by one unit in $R$.

\begin{Ex}

\begin{figure}
\begin{tikzpicture}[scale = 0.7]

\foreach \x in {0,10.4}{
\draw (0+\x,0)--(2+\x,2);
\draw (-1+\x,1)--(1+\x,3);

\draw (0+\x,0)--(-1+\x,1);
\draw (1+\x,1)--(0+\x,2);
\draw (2+\x,2)--(1+\x,3);

\draw [fill=black] (0+\x,0) circle [radius=0.1cm];
\draw [fill=black] (1+\x,1) circle [radius=0.1cm];
\draw [fill=black] (2+\x,2) circle [radius=0.1cm];

\draw [fill=black] (-1+\x,1) circle [radius=0.1cm];
\draw [fill=black] (0+\x,2) circle [radius=0.1cm];
\draw [fill=black] (1+\x,3) circle [radius=0.1cm];
};

\node at (0.35,0) {$a$};
\node at (1.45,1) {$c$};
\node at (2.3,2) {$e$};

\node at (-0.6,1) {$b$};
\node at (0.4,2) {$d$};
\node at (1.4,3) {$f$};

\draw [xshift=4.25 cm, yshift=1.5cm] (0,0.1)--(1,0.1)--(1,0.2)--(1.5,0)--(1,-0.2)--(1,-0.1)--(0,-0.1)--cycle;

\node at (5.25,2.1) {$\phi \circ \rho^{-1} \circ \phi^{-1}$};

\node at (11,0) {$\frac{bc}{b+c}$};
\node at (12.5,1) {$\frac{de(b+c)}{bd+cd+ce}$};
\node at (13.8,2) {$\frac{f(bd+cd+ce)}{ce}$};

\node at (8.6,1) {$\frac{d(b+c)}{b}$};
\node at (8.95,2) {$\frac{f(bd+cd+ce)}{d(b+c)}$};
\node at (9.8,3.1) {$\frac{1}{af(bd+cd+ce)}$};
\end{tikzpicture}
\caption{Two labelings $x$ and $z = \phi \circ \rho^{-1} \circ \phi^{-1}(x)$.}
\label{fig:shifting}
\end{figure}
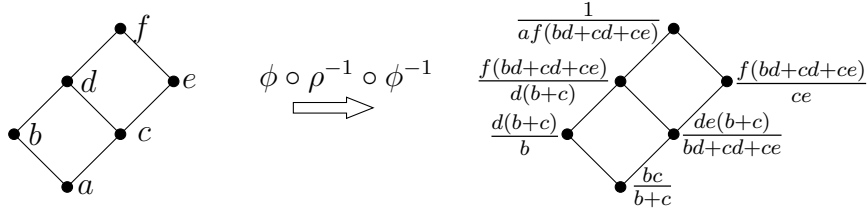

Consider the labeling $x$ of $R = [2] \times [3]$ and its image $z = \phi \circ \rho^{-1} \circ \phi^{-1}(x)$ in Figure~\ref{fig:shifting}. Then
\[z_{11}z_{21} = \frac{bc}{b+c}\cdot \frac{d(b+c)}{b} = cd = x_{12}x_{22}.\]
Similarly, we can compute all of the products along the rows and columns of $R$:
\begin{align*}
    z_{12}z_{22} &= x_{13}x_{23} & z_{13}z_{23} &= (x_{11}x_{12}x_{13})^{-1} \\
    (z_{11}z_{12}z_{13})^{-1} &= (x_{21}x_{22}x_{23})^{-1}&(z_{21}z_{22}z_{23})^{-1} &= x_{11}x_{21}
\end{align*}


From the above computation, we see that $\phi \circ \rho^{-1} \circ \phi^{-1}$ rotates the birational Stanley-Thomas word
\[(x_{11}x_{21}, \quad x_{12}x_{22}, \quad x_{13}x_{23},\quad  (x_{11}x_{12}x_{13})^{-1},\quad (x_{21}x_{22}x_{23})^{-1} ).\]

However, the shifting of chain sums happens more generally for sums of chains within subrectangles of $R$. For instance, if $I = [2] \times [2,3]$, $I' = [2] \times [1,2]$, and $k=1$, then we have
\begin{align*}
    z_{11}z_{21}z_{22}+z_{11}z_{12}z_{22} &= \frac{bc}{b+c}\cdot \frac{f(bd+cd+ce)}{d(b+c)} \left( \frac{d(b+c)}{b} + \frac{de(b+c)}{bd+cd+ce} \right) \\
    &= \frac{cf(bd+cd+ce)}{b+c} + \frac{bcef}{b+c}\\
    &= cdf+cef \\
    &= x_{12}x_{22}x_{23}+x_{12}x_{13}x_{22}.
\end{align*}
\end{Ex}

We now prove Lemma \ref{chainshift}. As we will see, it can be thought of as a manifestation of the translation invariance of the octahedron recurrence.

\begin{proof}[Proof of Lemma~\ref{chainshift}] Let $x$ be a labeling of $R$, let $y = \phi^{-1}(x)$, and let $(W_{ij}^{(k)})$ be the corresponding three-dimensional array. Also define $\widetilde x = \phi \circ \rho^{-1}\circ \phi^{-1}(x)$, $\widetilde y = \phi^{-1}(\widetilde x) = \rho^{-1}(y)$, and $(\widetilde W_{ij}^{(k)})$ the corresponding array.


We claim that $\widetilde W_{ij}^{(k)} = W_{i+1,j+1}^{(k)}$ for $1 \leq i,j \leq r+s-k$. Note that by the octahedron recurrence/Dodgson condensation formula, it suffices to prove the case $k=1$ since the entries for $k > 1$ are determinants of submatrices of these entries at height $1$. 

Assume $k=1$. If $i \geq j$, then the claim follows by Proposition~\ref{prop:wproperties}. If $i < j$, then applying Theorem~\ref{birationalrowmotionformula} twice gives
\[\widetilde W_{ij}^{(1)} = \frac{1}{\rho^{-i+1}(\widetilde y)_{j-i,1}} = \frac{1}{\rho^{-i}(y)_{j-i,1}} = W^{(1)}_{i+1,j+1},\] which proves the claim.

To complete the proof, we need only observe that applying Corollary~\ref{pathsinideal}(b) to the two sides of the desired equality expresses them as quotients of two entries of the form $W_{i+1,j+1}^{(k)}$ or $\widetilde W_{ij}^{(k)}$, respectively, which are then equal by the claim.
%
%
%
%
\end{proof}

\subsection{Generalized Stanley-Thomas words}
Given Lemma~\ref{chainshift}, it is natural to want to generalize the definition of the Stanley-Thomas word to include more words that are cyclically shifted by the action of rowmotion. 

The usual birational Stanley-Thomas word is obtained as the orbit of $\prod_{j=1}^s x_{1j} = \phi^{-1}(x)_{1s}$ under the action of $\phi \circ \rho \circ \phi^{-1}$ on $x$, or, put another way, the orbit of $y_{1s}$ under the action of $\rho$ on $y=\phi^{-1}(x)$. We can generalize this by replacing $y_{1s}$ with any other value $y_{is}$ or $y_{rj}$ lying on the upper boundary of $R$.

\begin{Def}
Let $x \in \mathbb R^R_{>0}$, and let $y=\phi^{-1}(x)$. A \emph{generalized Stanley-Thomas word} for $x$ is a sequence of one of the following two forms:
\begin{align*}
    ST_{i}(x)&=(y_{is}, \quad \rho^{-1}(y)_{is}, \quad \rho^{-2}(y)_{is}, \quad \dots, \quad \rho^{-r-s+1}(y)_{is}),\\
    \overline{ST}_j(x)&=(y_{rj}, \quad \rho^{-1}(y)_{rj}, \quad \rho^{-2}(y)_{rj}, \quad \dots, \quad \rho^{-r-s+1}(y)_{rj}).
\end{align*}
\end{Def}

The usual birational Stanley-Thomas word is therefore $ST_{1}$. The reason for choosing these words in particular is that they have a clean description in terms of $x$. We consider the case of $ST_i$ below; the other case is similar.

Recall that $y_{is} = \phi^{-1}(x)_{is} = w^{(1)}_{[i]\times [s]}(x)$. By Lemma~\ref{chainshift}, 
\[\rho^{-1}(y)_{is} = w^{(1)}_{[i] \times [s]}(\phi \circ \rho^{-1}(y)) = w^{(1)}_{[2,i+1] \times [s]}(x).\]
Iterating, we find that for $0 \leq k \leq r-i$,
\[\rho^{-k}(y)_{is} = w^{(1)}_{[k+1,k+i] \times [s]}(x).\]
To find the remaining coordinates, we apply Theorem~\ref{birationalrowmotionformula} (c), (b), and (a) in order to find that, for $r-i<k<r+s$,
\[\rho^{-k}(y)_{is} = \rho^{r+s-k}(y)_{is} = \frac{1}{\rho^{r-i-k+1}(y)_{r+1-i,1}} = W_{k+i-r,k+1}^{(1)}.\]

While $W_{k+i-r,k+1}^{(1)}$ is defined in terms of paths in $\mathcal G_R$, it is also easy to describe it in terms of the labeling $x$ directly. Choosing the northwest edges in a path in $\mathcal G_R$ from $P_{k+i-r}$ to $Q_{k+1}$ corresponds to choosing elements of $R$ at ranks $k+i-r+1,\dots, k+1$ traveling to the northwest (i.e., with increasing first coordinate), not necessarily using the edges of $R$. In other words, define
\[\omega_{ab} = \omega_{ab}(x) = \sum (x_{i_aj_a}x_{i_{a+1}j_{a+1}} \cdots x_{i_bj_b})^{-1},\]
where the sum ranges over all sequences $(i_a,j_a), (i_{a+1},j_{a+1}), \dots, (i_b,j_b) \in R$ such that $i_t+j_t = t$ for all $t$, and $i_a < i_{a+1} < \cdots < i_b$ (or equivalently $j_a \geq j_{a+1} \geq \cdots \geq j_b$). Then we have proved the following proposition.

\begin{Prop} \label{prop:st}
For any $x \in \mathbb R^R_{>0}$, the generalized Stanley-Thomas word $ST_i = ST_i(x)$ is given by
\begin{multline*}
    ST_i = (w^{(1)}_{[1,i] \times [s]}, \; w^{(1)}_{[2,i+1] \times [s]}, \; \dots, \; w^{(1)}_{[k+1,k+i] \times [s]}, \;
    \omega_{2,r-i+2}, \; \omega_{3,r-i+3}, \; \dots, \; \omega_{i+s,r+s}).
\end{multline*}
\end{Prop}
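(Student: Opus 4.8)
The plan is to compute each of the $r+s$ coordinates $\rho^{-k}(y)_{is}$ of $ST_i$ directly and then read off the two halves of the asserted formula, splitting the range of the exponent into the two regimes that appear in the statement: $0 \le k \le r-i$, where the coordinate turns out to be a chain sum $w^{(1)}$ inside $R$, and $r-i < k < r+s$, where it is a path sum $W^{(1)}$ in $\mathcal G_R$ that reorganizes into the quantities $\omega_{ab}$. The value at $k=0$ is simply $y_{is} = \phi^{-1}(x)_{is} = w^{(1)}_{[1,i]\times[s]}(x)$, the total weight of maximal chains in $[i]\times[s]$, which serves as the common base case (and may be cited from the description of $\phi^{-1}$ or from Corollary~\ref{pathsinideal}(b)).

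For the first regime I would induct on $k$ using the Chain Shifting Lemma. Since $\rho^{-k}(y) = \phi^{-1}\bigl((\phi\circ\rho^{-1}\circ\phi^{-1})^k(x)\bigr)$, the reflected form of Lemma~\ref{chainshift} converts each application of $\phi\circ\rho^{-1}\circ\phi^{-1}$ into an upward unit shift of the row-interval; concretely it gives $w^{(1)}_{[t+1,t+i]\times[s]}(\phi\circ\rho^{-1}\circ\phi^{-1}(x')) = w^{(1)}_{[t+2,t+1+i]\times[s]}(x')$. Iterating $k$ times collapses the chain of equalities to $\rho^{-k}(y)_{is} = w^{(1)}_{[k+1,k+i]\times[s]}(x)$. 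The key point to monitor is the hypothesis of Lemma~\ref{chainshift} that the shifted interval still lies in $R$, namely $t+1+i \le r$ at the final step $t=k-1$; this is exactly the constraint $k+i \le r$, i.e.\ $k \le r-i$, so the lemma applies throughout precisely this regime, which accounts for the first $r-i+1$ entries of $ST_i$.

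Once $k$ exceeds $r-i$ the shifted interval would leave $R$, so I would pivot to Theorem~\ref{birationalrowmotionformula}. Part (c) replaces $\rho^{-k}$ by $\rho^{r+s-k}$, and part (b) reflects the point $(i,s)$ to its antipode $(r+1-i,1)$, yielding $\rho^{-k}(y)_{is} = 1/\rho^{r-i-k+1}(y)_{r+1-i,1}$. Writing $m = k+i-r-1$, the inequalities $r-i<k<r+s$ translate precisely into $0 \le m \le s+i-2$, which is the hypothesis of part (a) for the point $(r+1-i,1)$; because the second coordinate is $1$, the numerator $W^{(0)}$ equals $1$ by Proposition~\ref{prop:wproperties}(b), leaving $\rho^{-m}(y)_{r+1-i,1} = 1/W^{(1)}_{k+i-r,\,k+1}$ and hence $\rho^{-k}(y)_{is} = W^{(1)}_{k+i-r,\,k+1}$. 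The composition of parts (c), (b), (a) is the most index-sensitive step, so I would verify the two endpoints $k=r-i+1$ and $k=r+s-1$ by hand to confirm the ranges and the exponent substitutions.

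The final step, which I expect to be the genuine obstacle, is the combinatorial identification $W^{(1)}_{a,b} = \omega_{a+1,b}$ converting a single-path sum in $\mathcal G_R$ into a weighted chain sum over $R$. A directed path from $P_a$ to $Q_b$ uses exactly one weighted (northwest) edge at each rank $a+1,\dots,b$, of weight $x_{ij}^{-1}$, with the unit-weight edges filling in between; these weighted edges select one element of $R$ per rank in that range with strictly increasing first coordinate, so summing $\prod x_{i_tj_t}^{-1}$ over all valid paths is by definition $\omega_{a+1,b}$. Substituting $a=k+i-r$, $b=k+1$ gives $\omega_{k+i-r+1,\,k+1}$, and matching $k=r-i+1,\dots,r+s-1$ against the list $\omega_{2,r-i+2},\dots,\omega_{i+s,r+s}$ identifies the remaining $s+i-1$ entries. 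Concatenating the two regimes then produces the stated expression for $ST_i$, completing the argument.
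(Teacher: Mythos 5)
Your proposal is correct and takes essentially the same route as the paper's own proof: the base case $y_{is}=\phi^{-1}(x)_{is}=w^{(1)}_{[1,i]\times[s]}(x)$, iterated application of the (reflected) chain shifting lemma for $0\le k\le r-i$, then Theorem~\ref{birationalrowmotionformula} parts (c), (b), (a) in that order to get $\rho^{-k}(y)_{is}=W^{(1)}_{k+i-r,k+1}$ for $r-i<k<r+s$, and finally the rank-by-rank identification of single paths in $\mathcal G_R$ with the sums $\omega_{a+1,b}$. The one small slip is citing Proposition~\ref{prop:wproperties}(b) for the numerator $W^{(0)}=1$: that proposition assumes $k>0$, and the correct justification is the convention $\det(A^{(0)}_{ij})=1$ from Section~\ref{sec:octahedron}.
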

An analogous formula for $\overline{ST}_j$ can be obtained by reflecting $R$.

\begin{Ex} \label{ex:st}
Let $R = [3] \times [3]$ and $x \in \mathbb R^R_{>0}$. The generalized Stanley-Thomas word $ST_2$ is 
\[\ST_2 = (w^{(1)}_{[1,2] \times [3]}, \;w^{(1)}_{[2,3] \times [3]}, \;\omega_{23},\; \omega_{34},\; \omega_{45},\; \omega_{56}).\]
In the $x$-coordinates we have the following:
\begin{align*}
    w^{(1)}_{[1,2] \times [3]} &= x_{11}x_{21}x_{22}x_{23}+x_{11}x_{12}x_{22}x_{23}+x_{11}x_{12}x_{13}x_{23},
    \\
    w^{(1)}_{[2,3] \times [3]} &= x_{21}x_{31}x_{32}x_{33}+x_{21}x_{22}x_{32}x_{33}+x_{21}x_{22}x_{23}x_{33},\\
    \omega_{23} &= \frac{1}{x_{11}x_{21}}, \\
    \omega_{34} &= \frac{1}{x_{12}x_{22}} + \frac{1}{x_{12}x_{31}} + \frac{1}{x_{21}x_{31}}, \\
    \omega_{45} &= \frac{1}{x_{13}x_{23}} + \frac{1}{x_{13}x_{32}} + \frac{1}{x_{22}x_{32}}, \\
    \omega_{56} &= \frac{1}{x_{23}x_{33}}.
\end{align*}
See Figure~\ref{fig:st} for a visualization of this example.
\end{Ex}

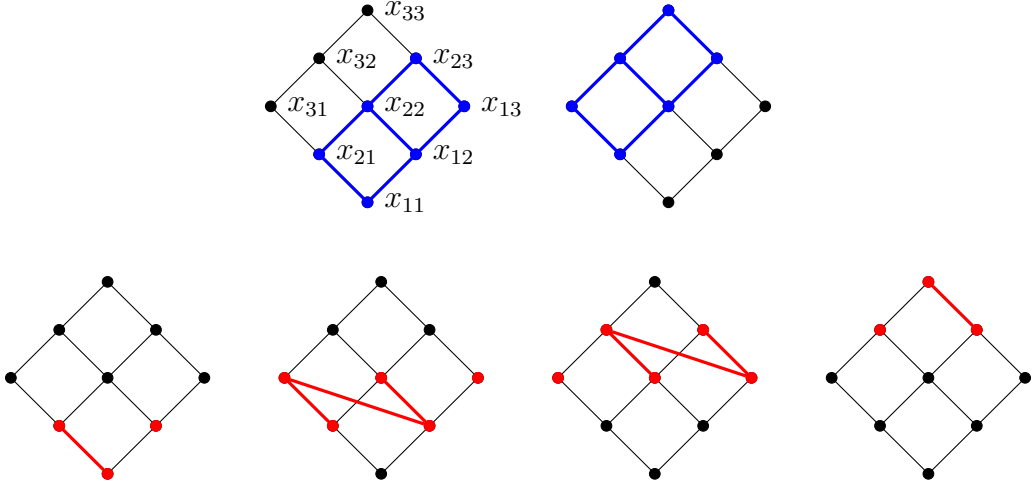
\begin{figure}
    \centering
    \begin{tikzpicture}[scale=.9]
        \begin{scope} [shift={(3.8,4)},rotate=45]
            \draw (1,1) grid (3,3);
            \foreach \x in {1,...,3}
                \foreach \y in {1,...,3}
                    \node[w,label=right:{$x_{\x\y}$}] (\x\y) at (\y,\x){};
            \draw[very thick,blue] (1,1) grid (3,2);
            \foreach \z in {11,12,13,21,22,23}
                \node[w,blue] at (\z){};
        \end{scope}
        \begin{scope} [shift={(8.2,4)},rotate=45]
            \draw (1,1) grid (3,3);
            \foreach \x in {1,...,3}
                \foreach \y in {1,...,3}
                    \node[w] (\x\y) at (\y,\x){};
            \draw[very thick,blue] (1,2) grid (3,3);
            \foreach \z in {21,22,23,31,32,33}
                \node[w,blue] at (\z){};
        \end{scope}
        \begin{scope} [rotate=45]
            \draw (1,1) grid (3,3);
            \foreach \x in {1,...,3}
                \foreach \y in {1,...,3}
                    \node[w] (\x\y) at (\y,\x){};
            \draw[very thick,red] (11)--(21);
            \foreach \z in {11,21,12}
                \node[w,red] at (\z){};
        \end{scope}
        \begin{scope} [shift={(4,0)},rotate=45]
            \draw (1,1) grid (3,3);
            \foreach \x in {1,...,3}
                \foreach \y in {1,...,3}
                    \node[w] (\x\y) at (\y,\x){};
            \draw[very thick,red] (22)--(12)--(31)--(21);
            \foreach \z in {22,12,31,21,13}
                \node[w,red] at (\z){};
        \end{scope}
        \begin{scope} [shift={(8,0)},rotate=45]
            \draw (1,1) grid (3,3);
            \foreach \x in {1,...,3}
                \foreach \y in {1,...,3}
                    \node[w] (\x\y) at (\y,\x){};
            \draw[very thick,red] (23)--(13)--(32)--(22);
            \foreach \z in {23,13,32,22,31}
                \node[w,red] at (\z){};
        \end{scope}
        \begin{scope} [shift={(12,0)},rotate=45]
            \draw (1,1) grid (3,3);
            \foreach \x in {1,...,3}
                \foreach \y in {1,...,3}
                    \node[w] (\x\y) at (\y,\x){};
            \draw[very thick,red] (23)--(33);
            \foreach \z in {23,33,32}
                \node[w,red] at (\z){};
        \end{scope}
    \end{tikzpicture}
    \caption{Computation of $ST_2$ in $R=[3] \times [3]$ as in Example~\ref{ex:st}. The first two diagrams indicate the total weight of maximal chains in intervals that shift according to Lemma~\ref{chainshift}. The last four diagrams indicate the total inverse weight of northwesterly collections of elements at specified ranks used to find $\omega_{ab}$.}
    \label{fig:st}
\end{figure}

As noted in \cite{josephroby1}, the cyclic rotation of $ST_1$ is not sufficient to uniquely determine birational rowmotion. However, the cyclic rotation of all $ST_i$ and $\overline{ST}_j$ does uniquely determine birational rowmotion, and in fact the chain shifting lemma alone nearly suffices. We make this statement precise in Section \ref{section:greenestheorem}.

\section{Birational RSK and Greene's Theorem} \label{section:greenestheorem}

In this section, we will define birational RSK in terms of toggles and show how our perspective gives a simple proof of the birational version of Greene's Theorem.

\subsection{Classical RSK} \label{sec:classical-rsk}

We first review some background on the classical RSK correspondence, which gives a bijection between nonnegative integer matrices $A$ and pairs of semistandard tableaux $(P,Q)$ of the same shape $\lambda$. (See, for instance, \cite{sagan} for more details.)

In the case when $A$ is the matrix of a permutation $\pi$, Greene's Theorem \cite{greene} states that $\lambda_1 + \cdots + \lambda_k$ is the maximum size of a union of $k$ increasing subsequences of $\pi$. In fact, one can use Greene's Theorem to compute not just the shape of $P$ and $Q$ but the entire tableaux: the shape $P^{\leq m}$ formed by the entries at most $m$ in $P$ corresponds to the permutation formed by the letters $1, \dots, m$ in $\pi$, while the shape $Q^{\leq m}$ corresponds to the permutation formed by the first $m$ letters of $\pi$.

When $A$ is a general $n \times n$ nonnegative integer matrix, a routine standardization argument gives the following generalization of Greene's Theorem: $\lambda_1 + \cdots + \lambda_k$ is the maximum weight of $k$ noncrossing paths (traveling weakly southeast) from $(1,1), \dots, (1,k)$ to $(n,n-k+1), \dots, (n,n)$ in $A$, where the weight of a path is the sum of the entries it contains. (As noted in \cite{farberhopkinstrongsiriwat}, this result appears to be somewhat folklore, but see \cite[Thm.\ 2.5]{noumiyamada} as well as \cite[Thm.\ 12]{krattenthaler}, \cite[Thm.\ 4.8.10]{sagan}.)
As in the permutation case, this can be used to characterize the entire $P$ and $Q$ tableaux since $P^{\leq m}$ (resp.\ $Q^{\leq m}$) corresponds to the submatrix formed by the first $m$ columns (resp.\ rows) of $A$. 

One way to see the impact of Greene's Theorem more directly is to transform $P$ and $Q$ into Gelfand-Tsetlin patterns and glue them along their top rows to form an $n \times n$ matrix with weakly increasing rows and columns that we denote by $RSK(A)$. By construction, this matrix has the property that, for $1 \leq k \leq i,j \leq n$ with either $i=n$ or $j=n$, 
\[\sum_{t=0}^{k-1} RSK(A)_{i-t,j-t}\] is the maximum weight of $k$ noncrossing paths from $(1,1), \dots, (1,k)$ to $(i,j-k+1), \dots, (i,j)$ in $A$.

In the next section, we will see how the map $A \mapsto RSK(A)$ can be generalized to the birational setting.

\subsection{Birational RSK}

For any interval $I \subseteq R$, denote by $\rho_I$ the map on labelings of $R$ given by the composition of toggles at the elements of $I$ (applied in the order of a linear extension from top to bottom).

\begin{Def}
Let $R = [r] \times [s]$, and let $m = \min\{r,s\}-1$. The \emph{birational RSK map} on labelings of $R$ is given by
\[RSK = \rho_{[r-m]\times[s-m]}^{-1} \circ \dots \circ \rho_{[r-2] \times [s-2]}^{-1} \circ \rho_{[r-1] \times [s-1]}^{-1} \circ \phi^{-1}.\]
\end{Def}

We will compare this to other formulations of birational RSK below, but for now, this can be treated as a definition. We will abuse notation and denote the piecewise-linear version of this map by $RSK$ as well.

Although $RSK$ is defined using toggles on portions of $R$, one can also describe the entries of $RSK(x)$ using the action of rowmotion on all of $R$.

\begin{Prop}
$RSK(x)_{r-i,s-j} = \rho^{-\min\{i,j\}} \circ \phi^{-1}(x)_{r-i,s-j}.$
\label{Cor:RSKasRowmotion}
\end{Prop}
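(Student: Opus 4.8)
The plan is to track the single coordinate at $(r-i,s-j)$ through the composition defining $RSK$ and to show that only the first $p:=\min\{i,j\}$ factors can affect it, after which it agrees with the value produced by $p$ applications of full inverse rowmotion. Write $I_t=[r-t]\times[s-t]$ for $0\le t\le m$, so that $I_0=R\supset I_1\supset\dots\supset I_m$ is a nested chain of order ideals, set $y=\phi^{-1}(x)$, and let $R_k=\rho_{I_k}^{-1}\circ\dots\circ\rho_{I_1}^{-1}$ (with $R_0=\mathrm{id}$). Since each $\rho_{I_t}^{-1}$ is a composition of toggles at elements of $I_t$, it fixes every coordinate outside $I_t$, and one checks that $(r-i,s-j)\in I_t$ precisely when $t\le p$. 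Hence the factors $\rho_{I_{p+1}}^{-1},\dots,\rho_{I_m}^{-1}$ leave this coordinate untouched, giving $RSK(x)_{r-i,s-j}=R_p(y)_{r-i,s-j}$. It therefore suffices to prove that $R_k(y)$ and $\rho^{-k}(y)$ agree on all of $I_k$ for every $k$, and then to specialize to $k=p$ using $(r-i,s-j)\in I_p$.

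The central tool will be a \emph{locality lemma}: for any order ideal $J\subseteq R$ and any labeling $z$, the partial map $\rho_J^{-1}(z)$ agrees with the full inverse rowmotion $\rho^{-1}(z)$ on every element of $J$. The point is that both maps are built from the same $R$-toggles $t_p$ applied from bottom to top, and when one toggles an element $p\in J$ its new value is computed from $z_p$, the already-toggled values at the lower neighbors $q\lessdot p$ (which lie in $J$ since $J$ is an order ideal), and the still-untoggled values at the upper neighbors $q\gtrdot p$. Because a toggle overwrites only its own coordinate, toggling elements outside $J$ never alters a value at an element of $J$, and each $q\gtrdot p$ carries its $z$-value at the moment $p$ is toggled in either computation. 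A bottom-to-top induction then forces the two computations to agree throughout $J$.

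With this in hand I would prove $R_k(y)=\rho^{-k}(y)$ on $I_k$ by induction on $k$, the case $k=0$ being trivial. For the step, set $z=R_{k-1}(y)$ and $z'=\rho^{-(k-1)}(y)$, which agree on $I_{k-1}$ by hypothesis. Applying the locality lemma to $z$ and the order ideal $I_k$ reduces the claim to showing that $\rho^{-1}(z)$ and $\rho^{-1}(z')$ agree on $I_k$. This follows from a short dependency argument: for $p\in I_k$ the upper neighbors $q\gtrdot p$ lie in $I_{k-1}$, so unwinding the bottom-to-top recursion (lower neighbors stay inside $I_k$, their upper neighbors inside $I_{k-1}$) shows that $\rho^{-1}(z)_p$ is a function of the restriction $z|_{I_{k-1}}$ alone. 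Since $z$ and $z'$ coincide on $I_{k-1}$, the two outputs coincide on $I_k$, completing the induction; taking $k=p$ yields the stated identity.

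I expect the main obstacle to be formulating and justifying the locality lemma and the dependency argument with care, since both hinge on the precise interaction between the bottom-to-top toggle order, the order-ideal property (which keeps the relevant lower neighbors inside $J$ and the relevant upper neighbors inside the next-larger ideal $I_{k-1}$), and the fact that a toggle writes only to its own coordinate. Once this containment bookkeeping is pinned down, the two inductions are routine and the reduction to $k=\min\{i,j\}$ delivers the proposition.
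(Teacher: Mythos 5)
Your proof is correct and takes essentially the same approach as the paper: the identical induction showing that $\rho_{I_k}^{-1}\circ\cdots\circ\rho_{I_1}^{-1}(y)$ and $\rho^{-k}(y)$ agree on $I_k=[r-k]\times[s-k]$, driven by the two facts that toggles write only to their own coordinate and that every neighbor of an element of $I_k$ lies in $I_{k-1}$. Your packaging of these facts into a separate locality lemma and dependency argument is just a more explicit rendering of the paper's two-sentence inductive step.
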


\begin{proof} 
We claim that $\rho_{[r-k]\times[s-k]}^{-1} \circ \cdots \circ \rho^{-1}_{[r-1]\times[s-1]}(y)$ and $\rho^{-k} (y)$ agree on $[r-k]\times[s-k]$. Indeed, suppose inductively that the claim holds for $k-1$. Since any neighbor of an element in $[r-k] \times [s-k]$ lies in $[r-k+1] \times [s-k+1]$, applying $\rho^{-1}_{[r-k,s-k]}$ to both $\rho_{[r-k+1]\times[s-k+1]}^{-1} \circ \cdots \circ \rho^{-1}_{[r-1]\times[s-1]}(y)$ and $\rho^{-k+1} (y)$ yields labelings that agree on $[r-k]\times[s-k]$. Then applying the remaining toggles in $\rho^{-1}$ (at elements of $R \setminus ([r-k] \times [s-k])$) to the latter also does not affect any of these values, proving the claim.

If $\min\{i,j\} = k$, then $\rho_{[r-k']\times[s-k']}^{-1}$ does not affect the label at $(r-i,s-j)$ for any $k'>k$, so the result follows from the claim.
\end{proof}

We are now ready to prove the following theorem, which will serve as a birational analogue to Greene's Theorem as described in Section~\ref{sec:classical-rsk}.

\begin{Th}[Birational Greene's Theorem] \label{plGreene}
Let $R = [r] \times [s]$, and choose $(i,j) \in R$ such that either $i = r$ or $j = s$. Then for any $x \in \mathbb R^{R}_{>0}$ and $1\leq k \leq \min \{i,j\}+1$,
\[\prod_{t=0}^{k-1} RSK(x)_{i-t,j-t} = w_{[i]\times[j]}^{(k)}(x).\]
\end{Th}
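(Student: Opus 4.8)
The plan is to reduce the left-hand side to a product of rowmotion values, apply the iterated rowmotion formula of Theorem~\ref{birationalrowmotionformula}(a), and then telescope. Write $y = \phi^{-1}(x)$. The first step uses Proposition~\ref{Cor:RSKasRowmotion}: to evaluate $RSK(x)_{i-t,j-t}$ I set $r-i'=i-t$ and $s-j'=j-t$, so that the exponent appearing there is $\min\{i',j'\}=\min\{r-i+t,\;s-j+t\}$. The hypothesis that either $i=r$ or $j=s$ forces one of $r-i$, $s-j$ to equal $0$, so this minimum collapses to $t$. Hence each factor simplifies to
\[RSK(x)_{i-t,j-t} = \rho^{-t}(y)_{i-t,j-t},\]
and this is precisely the place where the boundary hypothesis on $(i,j)$ enters.

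Next I would apply Theorem~\ref{birationalrowmotionformula}(a) to the position $(i-t,j-t)$ with power $t$; its hypothesis $0\le t\le r+s-(i-t)-(j-t)$ holds automatically, since $i\le r$ and $j\le s$ give $i+j-r-s\le 0\le t$. This yields
\[\rho^{-t}(y)_{i-t,j-t} = \frac{W_{t+2,\,i+1}^{(j-t-1)}}{W_{t+1,\,i+1}^{(j-t)}}.\]
The essential observation is that the middle index is the constant $i+1$ for every $t$, so on setting $a_t = W_{t+1,\,i+1}^{(j-t)}$ each factor becomes $a_{t+1}/a_t$ and the product telescopes:
\[\prod_{t=0}^{k-1} RSK(x)_{i-t,j-t} = \frac{a_k}{a_0} = \frac{W_{k+1,\,i+1}^{(j-k)}}{W_{1,\,i+1}^{(j)}}.\]

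Finally I would identify this quotient with the chain weight by applying Corollary~\ref{pathsinideal}(b) to the interval $I=[i]\times[j]=[1,i]\times[1,j]$, whose boundary conditions hold automatically because $i_1=j_1=1$; this gives exactly $w_{[i]\times[j]}^{(k)}(x) = W_{k+1,i+1}^{(j-k)}/W_{1,i+1}^{(j)}$, completing the argument. I expect the main obstacle to be bookkeeping at the boundary rather than any deep difficulty: the reduction $RSK(x)_{i-t,j-t}=\rho^{-t}(y)_{i-t,j-t}$ is literally meaningful only while $(i-t,j-t)\in R$, i.e.\ for $k\le\min\{i,j\}$, so the extreme value $k=\min\{i,j\}+1$ must be handled separately. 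In that degenerate case both sides vanish: the collection $\mathcal P_{[i]\times[j]}^{(k)}$ is empty, while in the telescoped expression the numerator $W_{k+1,i+1}^{(j-k)}$ is zero, either because its superscript $j-k$ is negative or because its indices violate the inequality of Proposition~\ref{prop:wproperties}(a). Verifying that all intermediate positions stay inside $R$ for $k\le\min\{i,j\}$, and confirming this boundary vanishing, is the only nonformal part of the proof.
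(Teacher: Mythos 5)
Your main argument coincides step-for-step with the paper's own proof: reduce $RSK(x)_{i-t,j-t}$ to $\rho^{-t}(y)_{i-t,j-t}$ via Proposition~\ref{Cor:RSKasRowmotion} (using $i=r$ or $j=s$ to collapse the minimum to $t$), apply Theorem~\ref{birationalrowmotionformula}(a), telescope, and finish with Corollary~\ref{pathsinideal}(b); on the range $1\le k\le \min\{i,j\}$ your write-up is complete and correct, and even checks hypotheses the paper leaves implicit.

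The one genuine problem is your treatment of the extreme case $k=\min\{i,j\}+1$, which the paper's proof silently ignores and which you try to settle by asserting that ``both sides vanish.'' That assertion fails in a sub-case: if $i<j=i+1$, then $j-k=0$, so the numerator $W_{k+1,i+1}^{(j-k)}=W_{i+2,i+1}^{(0)}$ is a height-zero entry, which equals $1$ by convention rather than $0$ (Proposition~\ref{prop:wproperties}(a) assumes $k>0$, so it says nothing here); and $\mathcal P_{[i]\times[j]}^{(k)}$ is not empty --- it consists of the unique family of $i+1$ vertical paths covering all of $[i]\times[i+1]$, so $w_{[i]\times[j]}^{(k)}(x)=w_{[i]\times[j]}>0$. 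Concretely, in Figure~\ref{fig:array} take $(i,j)=(2,3)$ and $k=3$: the telescoped quotient is $W_{4,3}^{(0)}/W_{1,3}^{(3)}=abcdef\ne 0$, which equals $w_R^{(3)}$. Moreover, no vanishing claim can rescue the literal statement at $k=\min\{i,j\}+1$, because the left-hand side then contains the factor $RSK(x)_{i-\min\{i,j\},\,j-\min\{i,j\}}$, one of whose indices is $0$, so it is simply undefined (and a product of positive labels could never equal $0$ in the other sub-cases anyway). The correct conclusion of your argument --- and of the paper's --- is the identity for $1\le k\le\min\{i,j\}$, which is the only range needed in the later applications; the telescoped identity $W_{k+1,i+1}^{(j-k)}/W_{1,i+1}^{(j)}=w_{[i]\times[j]}^{(k)}$ does persist at $k=\min\{i,j\}+1$ (it is just Corollary~\ref{pathsinideal}(b) read with the boundary conventions), but its identification with a product of $RSK$ entries does not.
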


\begin{proof}
Let $x \in \mathbb{R}_{>0}^{R}$ and let $y = \phi^{-1}(x)$. By Proposition \ref{Cor:RSKasRowmotion}, since \[\min\{r-(i-t),s-(j-t)\} = \min\{r-i,s-j\}+t = t,\] 
we have
\[RSK(x)_{i-t,j-t} = \rho^{-t}(y)_{i-t,j-t}.\] Theorem~\ref{birationalrowmotionformula}(a) then gives
\[\rho^{-t}(y)_{i-t,j-t} = \frac{W_{t+2,i+1}^{(j-t-1)}}{W_{t+1,i+1}^{(j-t)}}.\]
We then have the telescoping product
\[\prod_{t=0}^{k-1} RSK(x)_{i-t,j-t}=\prod_{t=0}^{k-1} \rho^{-t}(y)_{i-t,j-t} = \frac{W_{k+1,i+1}^{(j-k)}}{W_{1,i+1}^{(j)}} = w_{[i]\times[j]}^{(k)}(x)\]
by Corollary~\ref{pathsinideal}.
%
%
%
%
\end{proof}

Note that Theorem~\ref{plGreene} uniquely characterizes the map $RSK$ in terms of the values of $w_{[i]\times[j]}^{(k)}(x)$ where $i=r$ or $j=s$. To see why this can be considered to be a birational version of Greene's Theorem, consider the tropicalization of Theorem~\ref{plGreene}:
\[\sum_{t=0}^{k-1} RSK(x)_{i-t,j-t} = \max_{\mathcal{L} \in \mathcal{P}_{[i]\times[j]}^{(k)}} \sum_{z \in \mathcal{L}} x_z.\]
This corresponds directly to the description of $RSK(A)$ given in Section~\ref{sec:classical-rsk}. We can therefore treat this as a proof that the map $RSK$ as defined above is a birational analogue to the classical RSK correspondence.

\subsection{Relation to prior work}

Our construction of birational RSK using the octahedron recurrence is very similar to the approach taken by Danilov-Koshevoy \cite{danilovkoshevoy} (although their construction differs from the standard one by a symmetry of $A$). Our construction is also necessarily equivalent to the description of ``tropical RSK'' given by Noumi-Yamada \cite{noumiyamada} since their map is also constructed to satisfy Theorem~\ref{plGreene}.

In \cite{farberhopkinstrongsiriwat}, a birational version of RSK is described using the following piecewise-linear analogue of RSK defined in \cite{hopkins,pak}.
Let $x \in \mathcal{C}(R)$ be a point in the chain polytope of $R=[r]\times[s]$. We construct $y = rsk(x) \in \mathcal O(R)$ in the order polytope of $R$ using the following procedure.

\begin{enumerate}
    \item Set $y_{11} = x_{11}$.
    \item Choose an element $p \in R$ such that $y_q$ has been defined for all $q < p$. Set $y_p = x_p + \max\limits_{q \lessdot p} y_q$, and then toggle $y$ at all elements below $p$ in the same file.
    \item Repeat the previous step until all coordinates of $y$ have been defined.
\end{enumerate}

In fact, this map is the same as the piecewise-linear version of the map $RSK$ defined in the previous section.

\begin{Prop}
$RSK=rsk$.
\end{Prop}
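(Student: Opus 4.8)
The plan is to show that the two maps $RSK$ and $rsk$ agree coordinate by coordinate by identifying both with the same combinatorial/birational quantity, namely the weights $w_{[i]\times[j]}^{(k)}(x)$. The key is that Theorem~\ref{plGreene} already characterizes $RSK(x)$ completely: for each $(i,j)$ with $i=r$ or $j=s$, the anti-diagonal products $\prod_{t=0}^{k-1} RSK(x)_{i-t,j-t}$ equal $w_{[i]\times[j]}^{(k)}(x)$, and since these products run over a Gelfand--Tsetlin-style triangular array glued along its top row, they determine every entry $RSK(x)_{ab}$ uniquely. So it suffices to prove that $rsk(x)$ satisfies the \emph{same} telescoping identities, i.e.\ that $\prod_{t=0}^{k-1} rsk(x)_{i-t,j-t} = w_{[i]\times[j]}^{(k)}(x)$ whenever $i=r$ or $j=s$.

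First I would set up the detropicalized version of the iterative procedure defining $rsk$: replace $\max$ with addition and addition with multiplication, so that step (1) becomes $y_{11}=x_{11}$, step (2) sets $y_p = x_p \cdot \sum_{q\lessdot p} y_q$ and then applies birational toggles to all elements strictly below $p$ in the same file. The natural strategy is an induction on the order in which elements $p$ are processed, maintaining the invariant that after processing all elements of an order ideal $I$, the value $y_p$ for $p=(i,j)$ on the boundary of $I$ equals the total weight $w_{[i]\times[j]}^{(1)}(x)$ of maximal chains in $[i]\times[j]$, while the toggled labels below record the correct weights for the larger solid-minor products. Concretely, I expect the file-toggles in step (2) to be exactly the mechanism that enacts one step of the octahedron recurrence: toggling down a file corresponds to a translation in the array $W=(W_{ij}^{(k)})$, matching the description in Lemma~\ref{lemma:arraytoggle} and the shifting argument used in the proof of Lemma~\ref{chainshift}.

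The cleanest route is probably to avoid re-deriving everything from scratch and instead reduce directly to Proposition~\ref{Cor:RSKasRowmotion} and Theorem~\ref{plGreene}. I would argue that the detropicalized $rsk$ procedure, read appropriately, is literally performing the partial rowmotions $\rho_{[r-k]\times[s-k]}^{-1}$ one file at a time: defining $y_p = x_p \cdot \sum_{q\lessdot p}y_q$ is the inverse transfer map $\phi^{-1}$ being built up locally, and the subsequent file-toggles realize the successive $\rho^{-1}_{[r-k]\times[s-k]}$ restricted to the relevant file. Tracking which toggles act on which coordinate, one checks that after the full procedure each coordinate $(r-i,s-j)$ has been toggled exactly $\min\{i,j\}$ times in the pattern prescribed by the definition of $RSK$, so that $rsk(x)_{r-i,s-j} = \rho^{-\min\{i,j\}}\circ\phi^{-1}(x)_{r-i,s-j} = RSK(x)_{r-i,s-j}$.

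The main obstacle will be bookkeeping: verifying that the order-dependent, file-by-file toggling in step (2) produces exactly the same composite map as the globally-defined composition $\rho_{[r-m]\times[s-m]}^{-1}\circ\cdots\circ\rho_{[r-1]\times[s-1]}^{-1}\circ\phi^{-1}$, independent of the choices of $p$ allowed in step (2). Since birational toggles at non-adjacent elements commute, most reorderings are harmless, but one must confirm that interleaving the ``define $y_p$'' operations with the file-toggles does not change the outcome and that every element ends up toggled the correct number of times. I would handle this by induction on a linear extension, using commutation of toggles to normalize the order of operations into the block form of the $RSK$ definition, after which the identity $RSK=rsk$ follows immediately. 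The conceptual content is small; the work is entirely in making the correspondence of operations precise.
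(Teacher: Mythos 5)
Your preferred route---first deferring all toggles past the local assignments $y_p = x_p\cdot\sum_{q\lessdot p}y_q$ (so the procedure factors through $\phi^{-1}$), then using commutativity of toggles at non-adjacent elements to normalize the file-by-file toggling into the block composition defining $RSK$---is exactly the paper's proof, which carries out your anticipated bookkeeping by induction on the number of rows: the toggles created by processing the new row are regrouped row by row into sets $I_i$ and merged with the inductively known blocks via $\rho_{I_i}^{-1}\circ\rho_{[i-1]\times[s-r+i-1]}^{-1}=\rho_{[i]\times[s-r+i-1]}^{-1}$. Your justification for the deferral step is the same as the paper's (toggled elements never form cover relations with yet-unprocessed elements), so the proposal is correct and essentially identical in approach.
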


\begin{proof}
When performing step (2) above, none of the toggled elements form a cover relation with any element that has yet to be considered. Since the value of each new $y_p$ depends only on the values $y_q$ for $q \lessdot p$, the toggles at the previous steps do not affect the initial value of $y_p$. Hence we can apply all the toggles after first assigning all $y_p$, which is equivalent to first applying $\phi^{-1}$ to $x$.

Without loss of generality, we may assume $r \leq s$. If $r = 1$, then the rectangle is a chain and no toggles are applied, so $rsk = \phi^{-1} = RSK$.

Suppose inductively that on $[r]\times[s]$, $rsk = \rho_{[1] \times [s-r+1]}^{-1} \circ \dots \circ \rho_{[r-1]\times[s-1]}^{-1} \circ \phi^{-1}$, and consider the rectangle $R=[r+1] \times [s]$ for $r+1 \leq s$. After applying $rsk$ on the order ideal $[r] \times [s]$, to complete $rsk$ for $R$ we must apply toggles at all elements in $[r]\times[s]$ that lie in files $1-r, 2-r,\dots, s-1-r$. Since toggles at elements that are not adjacent commute, we can toggle row by row instead of file by file. Write $I_i = \{i\} \times [s-r-1+i] \subseteq R$ for $i \leq r$. We then find that on $R$,
\begin{align*}
    rsk &= \left( \rho_{I_1}^{-1} \circ \rho_{I_2}^{-1} \dots \circ \rho^{-1}_{I_r}  \right) \circ \rho_{[1] \times[s-r+1]}^{-1} \circ \dots \circ \rho_{[r-1]\times[s-1]}^{-1} \circ \phi^{-1} \\
    &= \left( \rho_{I_1}^{-1} \right) \circ \left( \rho_{I_2}^{-1} \circ \rho_{[1]\times[s-r+1]}^{-1} \right) \circ \dots \circ \left( \rho_{I_r}^{-1} \circ \rho_{[r-1]\times[s-1]}^{-1} \right) \circ \phi^{-1} \\
    &= \rho_{[1]\times[s-r]}^{-1} \circ \rho_{[2]\times[s-r+1]}^{-1} \circ \dots \circ \rho_{[r]\times[s-1]}^{-1} \circ \phi^{-1}\\
    &= RSK.\qedhere
\end{align*}
\end{proof}


\subsection{Relation to chain sums and Stanley-Thomas words}





Since $RSK$ is invertible, any $x \in \mathbb R^R_{>0}$ is uniquely determined by $RSK(x)$. From Theorem~\ref{plGreene}, it follows that the values of $w_{I}^{(k)}(x)$, where $I$ has the form $[r]\times[j]$ or $[i]\times[s]$, determine $x$.

Perhaps even more relevant for our work with rowmotion, we can also characterize $x$ using the types of chain sums that appear in Lemma~\ref{chainshift}.

\begin{Th}
Let $x \in \mathbb{R}^R_{>0}$. The chain sums $w_I^{(1)}(x)$, where $I$ ranges over intervals of the form $[r] \times [u,v]$ and $[u,v] \times [s]$, uniquely determine $x$.
\end{Th}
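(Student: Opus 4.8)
The plan is to reduce the statement to the observation made immediately before the theorem: the \emph{anchored} chain sums $w^{(k)}_{[r]\times[1,j]}(x)$ and $w^{(k)}_{[1,i]\times[s]}(x)$, ranging over all $k$, already determine $x$ (this is a consequence of Theorem~\ref{plGreene} together with the invertibility of $RSK$). Thus it suffices to recover each of these \emph{multi}-chain sums from the given collection of \emph{single}-chain sums over the wider class of intervals.

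The key observation I would use is that a single maximal chain of $R$ from $(1,a)$ to $(r,b)$ is monotone in both coordinates, so it never leaves the sub-band $[r]\times[a,b]$; conversely every maximal chain of $[r]\times[a,b]$ runs from $(1,a)$ to $(r,b)$. Hence the total weight of all single paths from $(1,a)$ to $(r,b)$ is exactly $w^{(1)}_{[r]\times[a,b]}(x)$, one of the given quantities (and $0$ when $a>b$, matching the empty-interval convention). Now fix $j$ and $k$: the elements of $\mathcal P^{(k)}_{[r]\times[1,j]}$ are families of $k$ nonintersecting paths from the sources $(1,1),\dots,(1,k)$ to the sinks $(r,j-k+1),\dots,(r,j)$, with sources and sinks in compatible left-to-right order along rows $1$ and $r$. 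Applying the Lindstr\"om--Gessel--Viennot Lemma \cite{gesselviennot,lindstrom} to the grid $R$ (equivalently, via Corollary~\ref{pathsinideal} and the minor description of the array $W$), I would conclude
\[
w^{(k)}_{[r]\times[1,j]}(x) = \det\left( w^{(1)}_{[r]\times[a,b]}(x) \right)_{\substack{1 \le a \le k \\ j-k+1 \le b \le j}},
\]
whose entries are all among the given data. The symmetric argument, using maximal chains of $[a,b]\times[s]$ and the given sums $w^{(1)}_{[a,b]\times[s]}(x)$, expresses each $w^{(k)}_{[1,i]\times[s]}(x)$ as a minor as well.

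Combining the two families of minors recovers all of the anchored chain sums $w^{(k)}_{[r]\times[1,j]}(x)$ and $w^{(k)}_{[1,i]\times[s]}(x)$ for every $k$, which by the discussion preceding the theorem determine $x$ uniquely, completing the argument.

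The step I expect to require the most care is the application of the Lindstr\"om--Gessel--Viennot Lemma: one must verify that the matrix entries are precisely the sub-band single-chain sums (so that the only nonintersecting source--sink matching is the order-preserving one, yielding a plus sign and no spurious permutation terms), that the relevant minor uses the first $k$ rows and the last $k$ columns, and that the $a>b$ entries vanish as needed. Once these indexing and orientation conventions are pinned down, the rest is bookkeeping, and I anticipate no genuine obstacle beyond confirming them.
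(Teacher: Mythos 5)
Your proposal is correct and follows essentially the same route as the paper: the paper likewise invokes the vertex-weighted Lindstr\"om--Gessel--Viennot Lemma to express each $w^{(k)}_{[r]\times[j]}$ (resp.\ $w^{(k)}_{[i]\times[s]}$) as a determinant whose entries are the given single-chain sums $w^{(1)}_{[r]\times[u,v]}$ (resp.\ $w^{(1)}_{[u,v]\times[s]}$), and then concludes via Theorem~\ref{plGreene} and the invertibility of $RSK$. The only difference is that you spell out the determinant's indexing and the monotonicity argument explicitly, which the paper leaves implicit.
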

\begin{proof}
By the Lindstr\"om-Gessel-Viennot Lemma on $R$ (using vertex weights), we can express any $w_{[r] \times [j]}^{(k)}$ as a determinant of a matrix with entries of the form $w_{[r] \times [u,v]}^{(1)}$, and similarly any $w_{[i] \times [s]}^{(k)}$  is determined by the values $w_{[u,v] \times [s]}^{(1)}$.
By Theorem~\ref{plGreene}, each entry of $RSK(x)$ is a quotient of (or equal to) entries of the form $w_{[r]\times[j]}^{(k)}$ or $w_{[i]\times[s]}^{(k)}$, so the result follows.
\end{proof}

In particular, since these chain sums all appear in the generalized Stanley-Thomas words, the following corollary is immediate.

\begin{Cor}
The map $\phi \circ \rho \circ \phi^{-1}$ is the unique function on $\mathbb R^R_{>0}$ that cyclically shifts the generalized Stanley-Thomas words $ST_i$ and $\overline{ST}_j$.
\end{Cor}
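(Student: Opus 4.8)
The plan is to combine two facts that are already in hand. The first is the preceding Theorem, which asserts that the chain sums $w_I^{(1)}(x)$ over intervals $I$ of the form $[r]\times[u,v]$ and $[u,v]\times[s]$ uniquely determine $x$. The second is the Chain Shifting Lemma (Lemma~\ref{chainshift}), which says precisely that under $\phi\circ\rho^{-1}\circ\phi^{-1}$ the sum $w_{[r]\times[u,v]}^{(1)}$ is carried to $w_{[r]\times[u-1,v-1]}^{(1)}$ (and symmetrically for the column-type intervals). The generalized Stanley-Thomas words $ST_i$ and $\overline{ST}_j$ are, by Proposition~\ref{prop:st}, built out of exactly these chain sums $w^{(1)}_{[a,b]\times[s]}$ and $w^{(1)}_{[r]\times[a,b]}$ (together with the $\omega_{ab}$ terms on the ``other half'' of each word).

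First I would establish that $\phi\circ\rho\circ\phi^{-1}$ does cyclically shift all of the $ST_i$ and $\overline{ST}_j$; this direction is the content built up in Section~\ref{section:chainshifting} (it follows from Lemma~\ref{chainshift} and Theorem~\ref{birationalrowmotionformula} exactly as in the construction of the words). That gives existence. The real assertion of the corollary is \emph{uniqueness}, and this is where I would lean on the preceding Theorem. Suppose $F$ is any function on $\mathbb R^R_{>0}$ that cyclically shifts every $ST_i$ and every $\overline{ST}_j$. I want to show $F=\phi\circ\rho\circ\phi^{-1}$.

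The key step is that the collection of \emph{first entries} of all the words $ST_i(x)$ and $\overline{ST}_j(x)$ already ranges over enough chain sums to pin down $x$: the leading entry of $ST_i$ is $w^{(1)}_{[1,i]\times[s]} = y_{is}$ (equivalently $w^{(1)}_{[i]\times[s]}$), and that of $\overline{ST}_j$ is $w^{(1)}_{[r]\times[1,j]}$. As $i$ and $j$ range, these give all chain sums $w^{(1)}_{[i]\times[s]}$ and $w^{(1)}_{[r]\times[j]}$; more to the point, the full list of entries appearing in all the $ST_i$ and $\overline{ST}_j$ includes every $w^{(1)}_{[a,b]\times[s]}$ and $w^{(1)}_{[r]\times[a,b]}$, since successive applications of the shift slide the interval window across $R$ (this is exactly the content of Proposition~\ref{prop:st}). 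By the preceding Theorem, these values determine $x$; hence the map $x\mapsto\bigl(ST_i(x),\overline{ST}_j(x)\bigr)_{i,j}$ is injective. If $F$ cyclically shifts all these words, then for each $x$ the tuple of words for $F(x)$ is the common cyclic shift of the tuple for $x$, which is the \emph{same} tuple realized by $\phi\circ\rho\circ\phi^{-1}(x)$; injectivity then forces $F(x)=\phi\circ\rho\circ\phi^{-1}(x)$.

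The main obstacle I anticipate is bookkeeping rather than any deep difficulty: I must verify that the entries occurring across the \emph{entire} family $\{ST_i,\overline{ST}_j\}$ really do include \emph{all} the single-path chain sums $w^{(1)}_{[u,v]\times[s]}$ and $w^{(1)}_{[r]\times[u,v]}$ needed to invoke the preceding Theorem, and that the $\omega_{ab}$ terms appearing in the ``back halves'' of the words do not interfere with this determination. Concretely, Proposition~\ref{prop:st} shows the front half of $ST_i$ cycles through $w^{(1)}_{[k+1,k+i]\times[s]}$ for $0\le k\le r-i$; letting $i$ vary recovers every window $[u,v]$ with $v-u+1=i$, so together they exhaust all subintervals, and symmetrically for $\overline{ST}_j$. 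Once this coverage is confirmed, the uniqueness argument is a one-line appeal to injectivity, and the corollary follows immediately.
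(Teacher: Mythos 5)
Your proof is correct and follows essentially the same route as the paper: the paper likewise derives the corollary from the preceding theorem (chain sums of the form $w^{(1)}_{[r]\times[u,v]}$ and $w^{(1)}_{[u,v]\times[s]}$ determine $x$) together with the observation, via Proposition~\ref{prop:st}, that all of these chain sums occur as entries of the words $ST_i$ and $\overline{ST}_j$, so that the word tuple determines $x$ and injectivity forces $F = \phi\circ\rho\circ\phi^{-1}$. Your write-up merely makes explicit the injectivity step and the interval-coverage bookkeeping that the paper compresses into the word ``immediate.''
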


By Lemma~\ref{chainshift}, most of the chain sums $w_I^{(1)}$ considered above get sent to other such sums. It follows that the chain shifting lemma determines ``most'' of rowmotion in the following sense.


\begin{Cor}
The chain shifting property of $\phi \circ \rho^{-1} \circ \phi^{-1}$ (stated in Lemma~\ref{chainshift})
uniquely determines $RSK(\phi \circ \rho^{-1} \circ \phi^{-1}(x))_{ij}$, where $j-i \neq s-r$.
\end{Cor}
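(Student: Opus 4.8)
The plan is to express each off-diagonal entry of $RSK(z)$, where $z = \phi \circ \rho^{-1} \circ \phi^{-1}(x)$, in terms of chain sums of $z$ that the chain shifting lemma identifies with chain sums of $x$. Fix a file index $f = j - i \neq s - r$ and let $(i_0, j_0)$ be the northeast endpoint of this file inside $R$. Since $f \neq s - r$, this endpoint lies strictly inside one of the two upper boundary edges: if $f < s - r$ then $(i_0, j_0) = (r, r + f)$ with $r + f < s$, while if $f > s - r$ then $(i_0, j_0) = (s - f, s)$ with $s - f < r$. In either case exactly one of $i_0 = r$ or $j_0 = s$ holds, and the complementary coordinate is strictly below its maximum.

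First I would recover the entries of $RSK(z)$ along this file from chain sums at the endpoint. By Theorem~\ref{plGreene}, for each admissible $k$ the telescoping product $\prod_{t=0}^{k-1} RSK(z)_{i_0 - t,\, j_0 - t}$ equals $w_{[i_0] \times [j_0]}^{(k)}(z)$, so taking ratios of consecutive products recovers the individual entry $RSK(z)_{i_0 - (k-1),\, j_0 - (k-1)} = w_{[i_0]\times[j_0]}^{(k)}(z)\big/ w_{[i_0]\times[j_0]}^{(k-1)}(z)$. As $(i,j)$ lies on this file, it suffices to show that every chain sum $w_{[i_0]\times[j_0]}^{(k)}(z)$ occurring here is determined by the chain shifting property.

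Next I would invoke the Lindström--Gessel--Viennot Lemma, as used earlier, to expand each $w_{[i_0]\times[j_0]}^{(k)}(z)$ as a determinant whose entries are single-chain sums of the form $w_{[r]\times[u,v]}^{(1)}(z)$ when $i_0 = r$, or $w_{[u,v]\times[s]}^{(1)}(z)$ when $j_0 = s$, with the column range $[u,v] \subseteq [1, j_0]$ (respectively the row range $[u,v] \subseteq [1, i_0]$). Because the endpoint is strictly interior, these ranges never reach the last column or row: we have $v \leq j_0 = r + f < s$ in the first case and $v \leq i_0 = s - f < r$ in the second. Consequently Lemma~\ref{chainshift} (in the first case) and its reflection (in the second) apply and give $w_{[r]\times[u,v]}^{(1)}(z) = w_{[r]\times[u+1,v+1]}^{(1)}(x)$, respectively $w_{[u,v]\times[s]}^{(1)}(z) = w_{[u+1,v+1]\times[s]}^{(1)}(x)$, each a determined quantity. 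Hence every chain sum required at the endpoint is pinned down by chain shifting, and therefore so is $RSK(z)_{ij}$.

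The main obstacle is the boundary bookkeeping: the entire argument depends on confirming that for $f \neq s - r$ the relevant column (or row) ranges never reach index $s$ (or $r$), since at such an index chain shifting would demand the out-of-range interval $[r]\times[u+1,\,s+1]$ of $x$ (respectively $[u+1,\,r+1]\times[s]$) and would not apply. This is precisely why the main diagonal $f = s - r$, whose endpoint is the corner $(r,s)$, must be excluded: its defining chain sums $w_{[r]\times[s]}^{(k)}(z)$ unavoidably involve the last column (equivalently the last row). Once strict interiority of the endpoint is established, the remaining steps are routine applications of Theorem~\ref{plGreene}, the Lindström--Gessel--Viennot Lemma, and Lemma~\ref{chainshift}.
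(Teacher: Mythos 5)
Your proof is correct, and its skeleton matches the paper's: you locate the boundary endpoint $(i_0,j_0)$ of the file through $(i,j)$ (which lies strictly inside the top-left or top-right boundary edge precisely because $j-i \neq s-r$), use Theorem~\ref{plGreene} to express the $RSK$ entries along that file as ratios of the chain sums $w_{[i_0]\times[j_0]}^{(k)}(z)$, and then shift those sums back to chain sums of $x$ via Lemma~\ref{chainshift} or its reflection. The one structural difference is in the middle step: the paper applies Lemma~\ref{chainshift} \emph{directly} to the $k$-path sums, i.e.\ $w_{[r]\times[j+r-i]}^{(k)}(z) = w_{[r]\times[2,\,j+r-i+1]}^{(k)}(x)$, since the lemma is stated for arbitrary $k$ and the hypothesis $j+r-i < s$ makes it applicable; you instead first expand $w_{[i_0]\times[j_0]}^{(k)}(z)$ as a Lindstr\"om--Gessel--Viennot determinant in the rank-one sums $w_{[r]\times[u,v]}^{(1)}(z)$ (respectively $w_{[u,v]\times[s]}^{(1)}(z)$) and shift each of those. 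Your detour is valid --- the interval ranges indeed satisfy $v \leq j_0 < s$, so every entry of the determinant is covered by the lemma --- and it buys a mild refinement: only the $k=1$ instances of the chain shifting property are needed to determine these $RSK$ entries. The cost is an extra invocation of LGV that the paper's general-$k$ lemma renders unnecessary.
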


\begin{proof}
To compute the $(i,j)$th coordinate of the $RSK$ map where $j-i<s-r$, we need only compute the values of $w_{[r] \times [j+r-i]}^{(k)}$ for $k \leq r-i+1$ by Theorem~\ref{plGreene}. Since $j+r-i<s$, by Lemma~\ref{chainshift}, $w_{[r] \times [j+r-i]}^{(k)}(\phi \circ \rho^{-1} \circ \phi^{-1}(x)) = w_{[r],[2,j+r-i+1]}^{(k)}(x)$.

The entries with $j-i>s-r$ can likewise be obtained using the reflected version of Lemma~\ref{chainshift}.
\end{proof}

\section{Acknowledgments}
The authors would like to thank Sam Hopkins, Alexander Postnikov, Darij Grinberg, and Tom Roby for useful conversations.

\bibliographystyle{acm}
\bibliography{citations}

\end{document}